\documentclass[reqno]{amsart}

\oddsidemargin 0cm \evensidemargin 0cm \topmargin -0.5cm
\textwidth 15cm \textheight 21cm



\usepackage{color}

\usepackage{cite}


\newtheorem{thm}{Theorem}[section]
\newtheorem{lem}[thm]{Lemma}
\newtheorem{cor}[thm]{Corollary}
\newtheorem{prop}[thm]{Proposition}

\newtheorem{rem}[thm]{Remark}


\numberwithin{equation}{section}

\newcommand{\Wqb}{W_{q,\mathcal{B}}}
\newcommand{\R}{\mathbb{R}}
\newcommand{\C}{\mathbb{C}}
\newcommand{\D}{\mathbb{D}}
\newcommand{\mS}{\mathbb{S}}

\newcommand{\N}{\mathbb{N}}
\newcommand{\A}{\mathbb{A}}
\newcommand{\B}{\mathbb{B}}
\newcommand{\T}{\mathbb{T}}

\newcommand{\E}{\mathbb{E}}

\newcommand{\ml}{\mathcal{L}}

\newcommand{\Om}{\Omega}

\newcommand{\ve}{\varepsilon}
\newcommand{\rd}{\mathrm{d}}
\newcommand{\dom}{\mathrm{dom}}

\newcommand{\dhr}{\mathrel{\lhook\joinrel\relbar\kern-.8ex\joinrel\lhook\joinrel\rightarrow}}

\begin{document}


\title[Age-Structured Diffusive Populations]{Properties of the Semigroup in $L_1$ Associated with Age-Structured Diffusive Populations}

\author{Christoph Walker}
\email{walker@ifam.uni-hannover.de}
\address{Leibniz Universit\"at Hannover\\ Institut f\" ur Angewandte Mathematik \\ Welfengarten 1 \\ D--30167 Hannover\\ Germany}
\date{\today}

\begin{abstract}
The linear semigroup associated with age-structured diffusive populations is investigated in the $L_1$-setting. A complete determination of its generator is given along with detailed spectral information that imply, in particular, an asynchronous exponential growth of the semigroup. Moreover, regularizing effects inherited from the diffusion part are exploited to derive additional properties of the semigroup.
\end{abstract}

\keywords{Age structure, semigroups of linear operators, parabolic evolution operators, asynchronous exponential growth.}
\subjclass[2010]{47D06, 47A10, 35K90, 35M10, 92D25}

\maketitle

\section{Introduction and Main Results}

A prototype model for the evolution of a diffusive population structured by 
age reads
\begin{subequations}\label{Eu1a}
\begin{align}
\partial_t u+\partial_a u&=\mathrm{div}_x\big(d(a,x)\nabla_xu\big)-m(a,x)u\ , && t>0\, , &  a\in (0,a_m)\, ,& & x\in\Om\, ,\label{u1a}\\
u(t,0,x)&=\int_0^{a_m} b(a,x)u(t,a,x)\,\rd a\, ,& & t>0\, , & & & x\in\Om\, ,\label{u2a}\\
\partial_\nu u(t,a,x)&=0\ ,& & t>0\, , &  a\in (0,a_m)\, ,& & x\in\partial\Om\, ,\label{u3a}\\
u(0,a,x)&=\phi(a,x)\ ,& & &  a\in (0,a_m)\, , & & x\in\Om\,.\label{u4a}
\end{align}
\end{subequations}
Here, $u=u(t,a,x)\ge 0$ is the population density at time $t\ge 0$, age $a\in [0,a_m)$ with maximal age $a_m\in (0,\infty]$, and spatial position $x\in\Om\subset \R^n$. The total population at a fixed time $t$ is  
$$
\int_0^{a_m} \int_\Omega u(t,a,x)\,\rd x\,\rd a\,.
$$
The age specific processes include death and birth processes with rates $m=m(a,x)\ge 0$  respectively $b=b(a,x)\ge 0$. Spatial dispersal is governed by the diffusion term in \eqref{u1a} with speed $d(a,x)>0$. The initial distribution of the population is $\phi=\phi(a,x)\ge 0$, and $\nu$ denotes the outward unit normal on $\partial \Om$. The investigation of linear and non-linear age-structured populations without and with spatial diffusion has a long history and there are many variants of Problem~\eqref{Eu1a} and different techniques to tackle them. We refer to \cite{Langlais88,Rhandi,RhandiSchnaubelt_DCDS99,WebbBook,WebbSpringer} and, for more recent contributions, to \cite{DM19,DM21,KangRuanJMB21,KangRuanJDE21,WalkerCrelle,WalkerDCDSA10,WalkerJEPE} and the references therein, though these lists are far from being complete.

Problem~\eqref{Eu1a} can be put in a more abstract framework by setting
$$
A(a)w:=\mathrm{div}_x\big(d(a,\cdot)\nabla_xw\big)-m(a,\cdot)w\, ,\quad w\in E_1\, ,
$$
where e.g. $E_1:=\Wqb^2(\Om)$  consists of all  functions $w$ in the Sobolev space  $W_q^{2}(\Om)$ with $q\in (1,\infty)$ satisfying the boundary condition $\partial_\nu w=0$ on $\partial\Om$. For a smooth and positive function $d$, the operator $A(a)$ is then the generator of an analytic semigroup in $E_0:=L_q(\Omega)$ with domain $E_1$.\\

Our attention is thus focused in the following on the abstract problem
\begin{subequations}\label{P} 
\begin{align}
\partial_t u+ \partial_au \, &=     A(a)u \,, \qquad t>0\, ,\quad a\in (0,a_m)\, ,\label{1}\\ 
u(t,0)&=\int_0^{a_m}b(a)\, u(t,a)\, \rd a\,, \qquad t>0\, ,\label{2} \\
u(0,a)&=  \phi(a)\,, \qquad a\in (0,a_m)\,,
\end{align}
\end{subequations}
for a function $u=u(t,a):\R^+\times [0,a_m)\rightarrow E_0^+$, where $a_m\in (0,\infty]$ and
$$
A(a): E_1\subset E_0\rightarrow E_0
$$ 
is for each $a\in [0,a_m)$ the generator of an analytic semigroup on some Banach lattice $E_0$ with domain $E_1$.  We shall be more specific about the assumptions when presenting the main results below. It is worth pointing out that the parabolic operator $A(a)$ and the age derivative $\partial_a$ --~being supplemented with a nonlocal boundary condition~\eqref{2}~-- act on different ``variables''.

It is known \cite{WebbSpringer} that a strongly continuous semigroup $(\mS(t))_{t\ge 0}$ in $\E_0:=L_1((0,a_m),E_0)$ can be associated with \eqref{P} if $A$ is independent of age and generates itself a strongly continuous semigroup on $E_0$. Indeed, integrating \eqref{1} formally  along characteristics gives the semigroup $(\mS(t))_{t\ge 0}$ almost explicitly (see \eqref{100} below). However, the corresponding infinitesimal generator~$\A$ has not been characterized completely except for the case that more restrictive conditions on the operator $A$ are imposed. More precisely, in \cite{WalkerMOFM} the generator~$\A$ is identified assuming the operator $A$ to possess {\it maximal $L_p$-regularity} (e.g. see \cite{LQPP} for more information on this property) restricting the phase space to  $L_p((0,a_m),E_0)$ with $p\in (1,\infty)$ and thus excluding the biologically ``natural'' space  $L_1((0,a_m),E_0)$ which is sufficient to give a meaning to the (local) overall population $\int_0^{a_m} u(t,a)\,\rd a$. The first aim of this research is to remedy this deficiency and improve the results of~\cite{WalkerMOFM}: we characterize the domain of the infinitesimal generator~$\A$ of the semigroup $(\mS(t))_{t\ge 0}$ also in the framework of  $\E_0=L_1((0,a_m),E_0)$ and without assuming the operator $A$ to have maximal $L_p$-regularity. The characterization of the generator in turn yields detailed information on its spectrum which implies, in particular, asynchronous exponential growth of the semigroup. 

The second aim of this research is then to provide further properties of the linear semigroup and its generator exploiting the regularizing properties inherited from the parabolic character of the diffusion operator. Such regularizing effects are derived from the explicit formula~\eqref{100} for the semigroup associated with \eqref{P}. They pave the way for the well-posedness \cite{WalkerEJAM,WalkerDCDSA10,WalkerJEPE} of nonlinear variants of \eqref{P} featuring a nonlinear operator $A=A(u)$ or a nonlinear birth rate $b=b(u)$.  Furthermore, the characterization of the generator in the framework of $\E_0=L_1((0,a_m),E_0)$ is particularly useful in the study of stability properties of equilibria in nonlinear problems, e.g. in order to derive a principle of linearized stability in a forthcoming work.

It is worth mentioning that other approaches than the one we choose herein (originating from \cite{WebbSpringer}), e.g. relying on integrated semigroups \cite{ThiemeDCDS,DM21}
or on perturbation techniques of Miyadera type  \cite{Rhandi,RhandiSchnaubelt_DCDS99} have been pursued as well and also yield the above mentioned asynchronous exponential growth. We also refer to the recent works \cite{KangRuanJMB21,KangRuanJDE21} on related age-structured equations with nonlocal diffusion.


\subsection*{Assumptions and Notations}

Set $J:=[0,a_m]$ if $a_m<\infty$ and $J:=[0,\infty)$ if $a_m=\infty$. We let $E_0$ be a real Banach lattice ordered by closed convex cone $E_0^+$ and 
assume throughout that 
\begin{equation*}
E_1\stackrel{d}{\dhr} E_0\,,
\end{equation*}
that is, $E_1$ is a dense subspace of $E_0$ with continuous and compact embedding. Given $\theta\in (0,1)$ and an admissible interpolation functor $(\cdot,\cdot)_\theta$ (see \cite{LQPP}),  we set $E_\theta:= (E_0,E_1)_\theta$. Then 
\begin{equation*}
E_1\stackrel{d}{\dhr} E_{\theta_1}\stackrel{d}{\dhr} E_{\theta_0} \stackrel{d}{\dhr}   E_0\,,\quad 0\le \theta_0<\theta_1\le 1\,.
\end{equation*} 
$E_\theta$ is equipped with the order naturally induced by $E_0^+$. We assume that there is $\rho>0$ such that
\begin{subequations}\label{A1}
\begin{equation}
A\in  C^\rho\big(J,\mathcal{H}(E_1,E_0)\big)\,,
\end{equation}
where $\mathcal{H}(E_1,E_0)$ is the subset of $\ml(E_1,E_0)$ of all generators of analytic semigroups on $E_0$ with domain $E_1$. 
Then \eqref{A1} and \cite[II.Corollary 4.4.2]{LQPP} imply that $A$ generates a parabolic evolution operator 
$$
\{\Pi(a,\sigma)\in\ml(E_0)\,;\, a\in J\,,\, 0\le\sigma\le a\}\,,
$$ 
on $E_0$ with regularity subspace $E_1$ in the sense of \cite[Section~II.2.1]{LQPP}. That is,
$$
\Pi\in C\big(J_\Delta,\ml_s(E_0)\big)\cap C\big(J_\Delta^*,\ml(E_0,E_1)\big)
$$
with $$
J_\Delta:= \{(a,\sigma)\in J\times J\,;\,  0\le\sigma\le a\}\,,\quad J_\Delta^*:= \{(a,\sigma)\in J\times J\,;\,  0\le\sigma< a\}
$$
satisfies
$$
\Pi(a,a)=1_{E_0}\,,\qquad \Pi(a,\sigma)=\Pi(a,s)\Pi(s,\sigma)\,,\quad (a,s), (s,\sigma)\in J_\Delta\,,
$$
and, for $a\in J$,
$$
\Pi(\cdot,a)\in C^1\big((a,a_m),\ml(E_0)\big) \,,\quad \Pi(a,\cdot)\in C^1\big([0,a),\ml_s(E_1,E_0)\big) 
$$
with
$$
\partial_1 \Pi(a,\sigma)=A(a)\Pi(a,\sigma)\,,\quad \partial_2 \Pi(a,\sigma)=- \Pi(a,\sigma) A(\sigma)\,,\quad (a,\sigma)\in J_\Delta^*\,.
$$
We further assume that
	there is $\varpi\in\R$ such that, if $\alpha \in [0,1]$, then 
	\begin{equation}\label{EO}
	\|\Pi(a,\sigma)\|_{\ml(E_\alpha)}+(a-\sigma)^\alpha\,\|\Pi(a,\sigma)\|_{\ml(E_0,E_\alpha)}\le M_\alpha e^{\varpi (a-\sigma)}\,,\qquad a\in J\,,\quad 0\le \sigma\le a \,,
	\end{equation}
	for some $M_\alpha\ge 1$ (if $a_m<\infty$ this is automatically satisfied, see~\cite[II.Lemma 5.1.3]{LQPP}) and
	\begin{equation}\label{A4}
	\text{if $a_m=\infty$, then $\varpi<0$ in \eqref{EO}}\,.
	\end{equation}
\end{subequations}
As for the birth rate $b$ we  assume that there is $\vartheta\in (0,1)$ with
\begin{equation}\label{A2}
b\in L_{\infty}\big(J,\ml(E_\theta)\big)\cap L_{1}\big(J,\ml(E_\theta)\big)\,, \quad \theta\in  \{0,\vartheta\}  
\,,
\end{equation}
and
\begin{equation}\label{A3}
b(a)\Pi(a,0)\in \ml_+(E_0) \ \text{is strongly positive}\footnote{Recall that if $E$ is an ordered Banach space, then $T\in \ml(E)$ is {\it strongly positive} if $Tz\in E$ is a {\it quasi-interior point} for each $z\in E^+\setminus\{0\}$, that is, if $\langle z',Tz\rangle_{E} >0$ for every $z'\in (E')^+\setminus\{0\}$.} \text{ for $a$ in a subset of $J$ of positive measure}\, .
\end{equation}
We set
$$
\|b\|_\theta:=\|b\|_{L_{\infty}(J,\ml(E_\theta))}
$$
in the following.

\begin{rem}
The assumptions that we impose are natural and easily checked in concrete applications. Indeed, \eqref{A1} and \eqref{A3} hold for uniformly elliptic operators satisfying the maximum principle while \eqref{A2} is a regularity assumption, e.g. see \cite{DanersKochMedina,WalkerJEPE}. For instance, the assumptions are satisfied for problem~\eqref{Eu1a} with $a_m<\infty$ provided that $E_0:=L_q(\Omega)$ and $E_1:=\Wqb^2(\Om)$  with $q\in (1,\infty)$, $d\in C^1(J\times\bar\Om)$  with $d(a,x)>0$, $m\in C^1(J, C(\bar\Om))$, and $b\in C(J, C^2(\bar\Om))$ is nonnegative and nontrivial (the regularities assumptions on $d, m$, and $b$ can even be weaken), see e.g. \cite{WalkerDCDSA10,WalkerJEPE}.
\end{rem}

Note that if $\lambda\in\C$, then
$$
\Pi_\lambda(a,\sigma):=e^{-\lambda (a-\sigma)}\Pi(a,\sigma)\,,\qquad a\in J\,,\quad 0\le\sigma\le a\,,
$$
is the evolution operator associated with $-\lambda+A(a)$. 
 In particular, for $x\in E_0$ and \mbox{$\phi\in \E_0=L_1(J,E_0)$}, the function $v\in C(J,E_0)$, given by
\begin{equation}\label{VdK}
v(a)=\Pi_\lambda(a,0)x+\int_0^a\Pi_\lambda(a,\sigma)\,\phi(\sigma)\,\rd \sigma\,,\quad a\in J\,,
\end{equation} 
is the {\it mild solution} to the Cauchy problem
$$
\partial_a v=(-\lambda+A(a))v+ \phi(a)\,,\quad a\in \dot{J}:=(0,a_m]\,,\qquad v(0)= x\,.
$$ 
Also recall from \cite[II. Theorems 1.2.1 \& 1.2.2]{LQPP} that 
\begin{equation}\label{strong}
\begin{split}
&\text{if $x\in E_1$ and $\phi\in C^\theta(J,E_0)+C(J,E_\theta)$ with $\theta \in (0,1]$,}\\
&\text{then $v\in  C^1(J,E_0)\cap C(J,E_1)$ is a strong solution\,.}
\end{split}
\end{equation}
We will use these facts frequently later on.

\subsection*{The Semigroup $(\mS(t))_{t\ge 0}$}

Integrating \eqref{1} formally along characteristics  yields that the solution 
$$
[\mS(t)\phi](a):=u(t,a)\,,\qquad t\ge 0\,,\quad a\in J\,,
$$ 
to \eqref{P}  with $\phi\in\E_0=L_1(J,E_0)$ is given by
\begin{subequations}\label{100}
  \begin{equation}
     \big[\mS(t) \phi\big](a)\, :=\, \left\{ \begin{aligned}
    &\Pi(a,a-t)\, \phi(a-t)\, ,& &   a\in J\,,\ 0\le t\le a\, ,\\
    & \Pi(a,0)\, B_\phi(t-a)\, ,& &  a\in J\, ,\ t>a\, ,
    \end{aligned}
   \right.
    \end{equation}
where $B_\phi:=u(\cdot,0)$ satisfies the Volterra equation 
    \begin{equation}\label{500}
    B_\phi(t)\, =\, \int_0^t \chi(a)\, b(a)\, \Pi(a,0)\, B_\phi(t-a)\, \rd
    a\, +\, \int_0^{a_m-t} \chi(a+t)\, b(a+t)\, \Pi(a+t,a)\, \phi(a)\, \rd a\, ,\quad
    t\ge 0\, ,
    \end{equation}
\end{subequations}		
with $\chi$ denoting the characteristic function of the interval $(0,a_m)$. That is, $B_\phi$ is such that
  \begin{equation}\label{6a}
    B_\phi(t)= \int_0^{a_m} b(a) \big[\mS(t)\phi\big](a)\, \rd a\, ,\quad t\ge 0\, .
    \end{equation}
The first result entails that $(\mS(t))_{t\ge 0}$ is a strongly continuous positive semigroup on $\E_0$ enjoying compactness properties and exhibiting regularizing effects induced by the parabolic evolution operator $\Pi$. We also provide a perturbation result in preparation for a future study of stability properties in nonlinear variants of~\eqref{P}.

\begin{thm}\label{T1}
Suppose \eqref{A1} and \eqref{A2}. \vspace{2mm}

{\bf (a)} $(\mS(t))_{t\ge 0}$ defined in \eqref{100} is a strongly continuous semigroup on $\E_0=L_1(J,E_0)$ which is eventually compact if $a_m<\infty$ and quasi-compact if $a_m=\infty$. If $b\in L_{\infty}\big(J,\ml_+(E_0)\big)$, then $(\mS(t))_{t\ge 0}$ is positive. \vspace{2mm} 

{\bf (b)} If $\alpha\in [0,1)$ and $\E_\alpha:=L_1(J,E_\alpha)$, then
\begin{equation}\label{E3}
\|\mS(t)\|_{\ml(\E_0,\E_\alpha)}\le M_\alpha\, e^{\varpi t}\,\left(\frac{\gamma\big(\|b\|_0 M_0 t,1-\alpha\big)}{(\|b\|_0 M_0)^{\alpha}}\, e^{\|b\|_0 M_0 t}+t^{-\alpha}\right)\,,\quad t> 0\,.
\end{equation}
In fact, 
\begin{equation}\label{E2}
\|\mS(t)\|_{\ml(\E_\theta)}\le M_\theta e^{(\varpi +\|b\|_\theta M_\theta)t}\,,\quad t\ge 0\,,\quad \theta\in  \{0,\vartheta\}  \,.
\end{equation}

{\bf (c)} Let $\A$ be the infinitesimal generator of the semigroup~$(\mS(t))_{t\ge 0}$. Consider $\alpha\in [0,1)$ and $\B\in\ml (\E_\alpha,\E_0)$. Then $\A+\B$ with $\dom(\A+\B):=\dom(\A)$ generates a strongly continuous semigroup~$(\T(t))_{t\ge 0}$ on $\E_0$ satisfying
\begin{equation}\label{E33T}
\T(t)\phi=\mS(t)\phi+\int_0^t\mS(t-s)\,\B\, \T(s)\,\phi\,\rd s\,,\quad t\ge 0\,,\quad \phi\in\E_0\,.
\end{equation}
 Moreover, there are $N_\alpha\ge 1$ and $\varsigma_\alpha\in\R$ such that
\begin{equation}\label{E3T}
\|\T(t)\|_{\ml(\E_0,\E_\alpha)}\le N_\alpha\, e^{\varsigma_\alpha t} t^{-\alpha}\,,\quad t> 0\,.
\end{equation}
If $b\in L_{\infty}\big(J,\ml_+(E_0)\big)$ and $\B\phi\in\E_0^+$ for $\phi\in\E_\alpha^+$, then the semigroup~$(\T(t))_{t\ge 0}$ is positive. 
\end{thm}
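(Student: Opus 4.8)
The plan is to build $(\T(t))_{t\ge 0}$ explicitly as a Dyson--Phillips series and to read off generation, the identity \eqref{E33T}, the estimate \eqref{E3T}, and positivity from that series together with its termwise bounds. First I record the two ingredients furnished by parts (a) and (b). From \eqref{E2} with $\theta=0$ there are $M\ge 1$ and $\omega\in\R$ with $\|\mS(t)\|_{\ml(\E_0)}\le M\,e^{\omega t}$, while \eqref{E3}, after using that the incomplete gamma factor $\gamma(\cdot,1-\alpha)$ is bounded by $\Gamma(1-\alpha)$, gives (enlarging $\omega$) a constant $C\ge 1$ with
\begin{equation*}
\|\mS(t)\|_{\ml(\E_0,\E_\alpha)}\le C\,e^{\omega t}\,t^{-\alpha}\,,\qquad t>0\,.
\end{equation*}
Since $\alpha<1$ this kernel is integrable at $0$, so the resolvent $(\lambda-\A)^{-1}=\int_0^\infty e^{-\lambda t}\mS(t)\,\rd t$ maps $\E_0$ into $\E_\alpha$ for $\lambda>\omega$, whence $\dom(\A)=\im\big((\lambda-\A)^{-1}\big)\subset\E_\alpha$ and $\A+\B$ is meaningful on $\dom(\A)$.

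Next I set $\mS_0:=\mS$ and recursively $\mS_{j+1}(t)\phi:=\int_0^t \mS(t-s)\,\B\,\mS_j(s)\,\phi\,\rd s$, which is well defined because $\mS_j(s)\phi\in\E_\alpha$ for $s>0$ and $\B\in\ml(\E_\alpha,\E_0)$. Writing $\beta:=\|\B\|_{\ml(\E_\alpha,\E_0)}$ and inserting the displayed bound, an induction based on the Euler Beta integral $\int_0^t(t-s)^{-\alpha}s^{\mu-1}\,\rd s=\tfrac{\Gamma(1-\alpha)\Gamma(\mu)}{\Gamma(1-\alpha+\mu)}\,t^{\mu-\alpha}$ with $\mu=(j{+}1)(1-\alpha)$ yields
\begin{equation*}
\|\mS_j(t)\|_{\ml(\E_0,\E_\alpha)}\le C\,\frac{\big(\beta\,C\,\Gamma(1-\alpha)\big)^{j}\,\Gamma(1-\alpha)}{\Gamma\big((j+1)(1-\alpha)\big)}\,e^{\omega t}\,t^{(j+1)(1-\alpha)-1}\,,\qquad t>0\,.
\end{equation*}
Because $1-\alpha>0$, the denominators $\Gamma\big((j+1)(1-\alpha)\big)$ grow faster than geometrically, so $\sum_j\|\mS_j(t)\|_{\ml(\E_0,\E_\alpha)}$ converges for every $t>0$; factoring out $t^{-\alpha}$ the sum is $C\Gamma(1-\alpha)e^{\omega t}t^{-\alpha}$ times a two--parameter Mittag--Leffler function of $\beta C\Gamma(1-\alpha)\,t^{1-\alpha}$, whose small--$t$ limit is dominated by the $j=0$ term $\sim C t^{-\alpha}$ and whose large--$t$ growth is exponential. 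This is exactly \eqref{E3T}. A parallel non--singular estimate, using $\|\mS(t-s)\|_{\ml(\E_0)}\le M e^{\omega(t-s)}$ on the outer factor, shows that $\sum_j\mS_j(t)$ also converges in $\ml(\E_0)$, uniformly on bounded time intervals.

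I then put $\T(t):=\sum_{j\ge 0}\mS_j(t)\in\ml(\E_0)$. By construction $\T$ solves the Volterra equation \eqref{E33T}, strong continuity at $t=0^+$ follows because $\mS(t)\phi\to\phi$ while the tail $\sum_{j\ge 1}\mS_j(t)$ carries positive powers of $t$ in $\ml(\E_0)$, and the semigroup law $\T(t+s)=\T(t)\T(s)$ follows from the convolution structure of the iterates (equivalently from uniqueness for \eqref{E33T}). To identify the generator I take Laplace transforms in \eqref{E33T}: with $\widehat{\mS}(\lambda)=(\lambda-\A)^{-1}$ one gets $\widehat{\T}(\lambda)=\widehat{\mS}(\lambda)+\widehat{\mS}(\lambda)\,\B\,\widehat{\T}(\lambda)$, hence $\widehat{\T}(\lambda)=\big(1-(\lambda-\A)^{-1}\B\big)^{-1}(\lambda-\A)^{-1}$ for large $\lambda$, the Neumann series converging since $\|(\lambda-\A)^{-1}\B\|_{\ml(\E_\alpha)}\le \beta\,C\,\Gamma(1-\alpha)\,(\lambda-\omega)^{-(1-\alpha)}\to 0$. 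As $\lambda-\A-\B=(\lambda-\A)\big(1-(\lambda-\A)^{-1}\B\big)$ on $\dom(\A)$, this identifies $\widehat{\T}(\lambda)=(\lambda-\A-\B)^{-1}$, so $\A+\B$ with domain $\dom(\A)$ generates $(\T(t))_{t\ge 0}$. (Alternatively, the displayed integrability bound is precisely the Miyadera--Voigt condition, which delivers generation and \eqref{E33T} directly.)

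Finally, positivity is immediate from the series: if $b\in L_{\infty}(J,\ml_+(E_0))$ then $(\mS(t))_{t\ge0}$ is positive by (a), and since $E_\alpha^+=E_\alpha\cap E_0^+$ one has $\mS(s)\phi\in\E_\alpha^+$ for $\phi\in\E_0^+$; the hypothesis $\B\,\E_\alpha^+\subset\E_0^+$ then propagates positivity through every iterate $\mS_j(t)$, so $\T(t)=\sum_j\mS_j(t)\ge 0$. I expect the main obstacle to be the second paragraph: taming the iterated singular convolution so as to obtain a summable bound that is sharp of order $t^{-\alpha}$ near $t=0$ yet exponentially controlled for large $t$ --- this is where the Beta--integral recursion and the factorial growth of $\Gamma\big((j+1)(1-\alpha)\big)$ are decisive --- and, in tandem, ensuring the strong continuity of $s\mapsto\mS(s)\phi$ into $\E_\alpha$ on $(0,\infty)$ that is needed to make sense of the Bochner integrals defining the $\mS_j$.
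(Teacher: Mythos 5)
Your proposal establishes only part \textbf{(c)} of the theorem. Parts \textbf{(a)} and \textbf{(b)} enter your argument as given ``ingredients,'' which is circular when the statement to be proven is the whole theorem, and those parts are where most of the analytic work in the paper actually lies: the solvability of the Volterra equation \eqref{500} for $B_\phi$ (Lemma~\ref{L333}), the singular Gronwall bound \eqref{p1} on $B_\phi$, the splitting of \eqref{100} together with \eqref{EO} that produces the estimates \eqref{E3} and \eqref{E2}, and the compactness assertions of part \textbf{(a)} --- eventual compactness for $a_m<\infty$ via Kolmogorov's criterion (equicontinuity of translates of $\mS(t)\phi$, the compact embedding of $E_\vartheta$ in $E_0$) and quasi-compactness for $a_m=\infty$ via the decomposition $\mS(t)=K(t)+(\mS(t)-K(t))$. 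None of this appears in your proposal, so as a proof of the full statement it has a genuine gap.

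Within part \textbf{(c)}, however, your argument is correct and takes a genuinely different route from the paper's. The paper verifies the Miyadera--Voigt condition $\int_0^{t_0}\|\B\,\mS(t)\phi\|_{\E_0}\,\rd t\le q\,\|\phi\|_{\E_0}$ and cites the Miyadera--Voigt theorem for generation and for \eqref{E33T} on $\dom(\A)$, derives \eqref{E3T} by applying the singular Gronwall inequality to $\|\T(t)\phi\|_{\E_\alpha}$ for $\phi\in\dom(\A)$ and then extends by density, and obtains positivity indirectly, from resolvent positivity of $\A$ and the Neumann series $(\lambda-\A-\B)^{-1}=(\lambda-\A)^{-1}\sum_{j}\big[\B(\lambda-\A)^{-1}\big]^j$. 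You instead build $\T$ as a Dyson--Phillips series, prove \eqref{E3T} by the Beta-integral recursion --- your induction is correct, and summing the resulting Mittag--Leffler-type series is in effect a self-contained proof of the singular Gronwall lemma the paper cites --- identify the generator by Laplace transform, and get positivity termwise; the last point is arguably cleaner and more elementary than the paper's resolvent argument, since it never leaves the time domain. What your route costs is the need to justify the Bochner integrals defining the iterates, i.e.\ strong continuity of $s\mapsto\mS(s)\phi\in\E_\alpha$ for $s>0$, which you flag but do not prove; it does hold, since for $s>s_0>0$ one has $\mS(s)\phi=\mS_\alpha(s-s_0)\mS(s_0)\phi$ with $\mS(s_0)\phi\in\E_\alpha$ by \eqref{E3} and $(\mS_\alpha(t))_{t\ge0}$ strongly continuous on $\E_\alpha$ by Corollary~\ref{T1B}, whose proof is independent of part \textbf{(c)}. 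With that point supplied, and with parts \textbf{(a)} and \textbf{(b)} proven rather than assumed, your treatment of \textbf{(c)} would be a valid alternative to the paper's.
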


That $(\mS(t))_{t\ge 0}$ defines a strongly continuous positive semigroup on $\E_0=L_1(J,E_0)$ with the stated compactness properties can be verified by direct computations as in \cite[Theorem 4]{WebbSpringer} and \cite{WalkerMOFM}. The additional estimates~\eqref{E3} and~\eqref{E2} are due to \eqref{EO}, where $\gamma$ denotes the lower incomplete gamma function
$$
\gamma(x,\xi):=\int_0^x s^{\xi-1}\, e^{-s}\,\rd s\le \Gamma(\xi)\,,\qquad x\ge 0\,,\quad \xi>0 \,.
$$
Part~{\bf (c)} of Theorem~\ref{T1} relies on estimate~\eqref{E3}.\\

Motivated by \eqref{E2} we note that the restriction of $(\mS(t))_{t\ge 0}$ to $\E_\alpha$ defines a strongly continuous positive semigroup which is also a useful tool for the investigation of nonlinear problems.

\begin{cor}\label{T1B}
Suppose \eqref{A1} and \eqref{A2}. Given $\alpha\in [0,1)$, let $\mS_\alpha(t):=\mS(t)\vert_{\E_\alpha}$ for $t\ge 0$ be the restriction of $\mS(t)$ to $\E_\alpha=L_1(J,E_\alpha)$. Then $(\mS_\alpha(t))_{t\ge 0}$ is a strongly continuous  semigroup on $\E_\alpha$.  Restricting to $\alpha\in [0,\vartheta)$, the semigroup  $(\mS_\alpha(t))_{t\ge 0}$ is eventually compact if $a_m<\infty$ and quasi-compact if $a_m=\infty$.
\end{cor}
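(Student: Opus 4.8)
The plan is to treat the three assertions separately: invariance of $\E_\alpha$ under $\mS(t)$ (so that $\mS_\alpha(t)$ is well defined), strong continuity on $\E_\alpha$, and the compactness properties. Throughout I would rely on the explicit representation~\eqref{100} together with the smoothing estimates already recorded in Theorem~\ref{T1}.

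First I would check that $\mS(t)$ leaves $\E_\alpha$ invariant, with a bound that is locally uniform in $t$. Splitting~\eqref{100}, the transport part $\Pi(a,a-t)\phi(a-t)$ is controlled in $L_1(J,E_\alpha)$ by $M_\alpha e^{\varpi t}\|\phi\|_{\E_\alpha}$ using the first term of~\eqref{EO}, whereas the birth part $\Pi(a,0)B_\phi(t-a)$ is handled through the regularizing bound $\|\Pi(a,0)\|_{\ml(E_0,E_\alpha)}\le M_\alpha a^{-\alpha}e^{\varpi a}$ combined with the $E_0$-bound for $B_\phi$ coming from~\eqref{6a} and~\eqref{E2} (with $\theta=0$); since $\alpha<1$ the factor $a^{-\alpha}$ is integrable near $0$, so this contribution is finite and in fact tends to $0$ as $t\to 0$. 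This gives $\sup_{t\in[0,T]}\|\mS(t)\|_{\ml(\E_\alpha)}<\infty$ for every $T$, and the semigroup identity on $\E_\alpha$ is inherited from the one on $\E_0$ by restriction.

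The delicate point---the step I expect to be the main obstacle---is strong continuity at $t=0$ in the norm of $\E_\alpha$, since the evolution operator $\Pi$ is only known to be strongly continuous on $E_0$ and not on the interpolation space $E_\alpha$ when $\alpha>0$. I would first prove continuity on the dense subspace $\E_\beta=L_1(J,E_\beta)$ with $\alpha<\beta\le 1$ and then pass to all of $\E_\alpha$ by a three-$\varepsilon$ argument based on the uniform bound above. After the substitution $b=a-t$ the transport part contributes $\int\|\Pi(b+t,b)\phi(b)-\phi(b)\|_{E_\alpha}\,\rd b$; for almost every $b$ one has $\Pi(b+t,b)\phi(b)\to\phi(b)$ in $E_0$ while $\|\Pi(b+t,b)\phi(b)\|_{E_\beta}$ stays bounded by~\eqref{EO}, so the interpolation inequality $\|\cdot\|_{E_\alpha}\le C\,\|\cdot\|_{E_0}^{1-\alpha/\beta}\,\|\cdot\|_{E_\beta}^{\alpha/\beta}$ forces the integrand to $0$ pointwise; being dominated by $C\,\|\phi(b)\|_{E_\alpha}\in L_1(J)$, dominated convergence applies. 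The two remaining pieces---translation continuity of $\phi$ in $L_1(J,E_\alpha)$ and the birth part integrated over the shrinking interval $[0,t)$---vanish as $t\to 0$ by the first step. Right continuity for $t>0$ and left continuity then follow from the semigroup property and local boundedness.

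Finally, for the compactness properties on $\E_\alpha$ I would not re-derive anything but instead factor the semigroup through $\E_0$. For any $0<s<t$ one has, as operators on $\E_\alpha$,
\begin{equation*}
\mS_\alpha(t)=\mS(s)\,\mS(t-s)\,\iota\,,
\end{equation*}
where $\iota:\E_\alpha\hookrightarrow\E_0$ is the continuous inclusion, $\mS(t-s)\in\ml(\E_0)$, and $\mS(s)\in\ml(\E_0,\E_\alpha)$ is bounded by the smoothing estimate~\eqref{E3}. If $a_m<\infty$, Theorem~\ref{T1}(a) makes $\mS(t-s)$ compact on $\E_0$ once $t-s$ is large, so $\mS_\alpha(t)$ is compact for all large $t$; hence $(\mS_\alpha(t))_{t\ge 0}$ is eventually compact. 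If $a_m=\infty$, submultiplicativity of the essential norm yields $\|\mS_\alpha(t)\|_{\mathrm{ess}}\le\|\mS(s)\|_{\ml(\E_0,\E_\alpha)}\,\|\mS(t-s)\|_{\mathrm{ess},\,\E_0}\,\|\iota\|$, and since $(\mS(t))_{t\ge 0}$ is quasi-compact on $\E_0$ the right-hand side tends to $0$ as $t\to\infty$, so $(\mS_\alpha(t))_{t\ge 0}$ is quasi-compact. Notably this factorization delivers the compactness assertions for every $\alpha\in[0,1)$, the restriction to $[0,\vartheta)$ being needed only if one prefers to argue by interpolating compactness between $\E_0$ and $\E_\vartheta$.
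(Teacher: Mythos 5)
Your proof is correct, but it departs from the paper's in both substantive steps, most notably in the compactness part. For strong continuity the paper estimates $\|\mS(t)\phi-\phi\|_{\E_\alpha}$ directly for arbitrary $\phi\in\E_\alpha$, splitting into four terms: the smoothed birth part, $\int_0^t\|\Pi(a,0)\|_{\ml(E_\alpha)}\|\phi(a)\|_{E_\alpha}\,\rd a$, translation continuity in $L_1(J,E_\alpha)$, and dominated convergence for $\int_t^{a_m}\|\Pi(a,a-t)\phi(a)-\phi(a)\|_{E_\alpha}\,\rd a$; that last step tacitly uses pointwise convergence $\Pi(a,a-t)x\to x$ in $E_\alpha$ for $x\in E_\alpha$, a property of the evolution operator on interpolation spaces that is itself normally obtained by exactly the density-plus-interpolation argument you make explicit, so your reduction to the dense subspace $\E_\beta$ (simplest with $\beta=1$, where $\|x\|_{E_\alpha}\le c\,\|x\|_{E_0}^{1-\alpha}\|x\|_{E_1}^{\alpha}$ is immediate from the interpolation functor being of exponent $\alpha$, avoiding any reiteration argument needed for $\beta<1$) is, if anything, more self-contained. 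For compactness the difference is genuine: the paper re-runs Kolmogorov's criterion inside $\E_\alpha$, which is precisely why it needs the compact embedding $E_\vartheta\hookrightarrow E_\alpha$ and hence the restriction $\alpha<\vartheta$; you instead factor $\mS_\alpha(t)=\mS(s)\,\mS(t-s)\,\iota$ and import the already-proved $\E_0$-results, using only $\mS(s)\in\ml(\E_0,\E_\alpha)$ from \eqref{E3} and the fact that sandwiching a compact operator between bounded ones preserves compactness (respectively preserves, up to the factor $\|\mS(s)\|_{\ml(\E_0,\E_\alpha)}\|\iota\|$, the distance to the compact operators, since $\mS(s)K\iota$ is compact whenever $K$ is). This is shorter, avoids repeating the equicontinuity estimates, and in fact strengthens the corollary: eventual compactness (for $t>2a_m$) and quasi-compactness hold for every $\alpha\in[0,1)$, the restriction to $[0,\vartheta)$ being an artifact of the paper's method rather than of the statement.
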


\subsection*{The Generator $\A$}

As pointed out in the introduction the infinitesimal generator $\A$ associated with the semigroup $(\mS(t))_{t\ge 0}$ is identified \cite{WalkerMOFM} only in $L_p(J,E_0)$ with $p\in (1,\infty)$ when assuming that $A$  has the property of maximal $L_p$-regularity. That this additional assumption is not needed for a characterization of $\A$ for the case $p=1$ is shown in the next theorem. It relies on an explicit formula for the resolvent of $\A$ (see~\eqref{inv} below). Setting
$$
\D:=\left\{\psi\in C^1(J,E_0)\cap  C(J,E_1)\,;\, \psi(0)=\int_0^{a_m} b(a) \psi(a)\,\rd a\right\}
$$
we also show that the subspace $\D$ is a core for the domain $D(\A)$ if $a_m<\infty$; that is, $\D$ is dense in $\dom(\A)$ when the latter is equipped with its graph norm.

\begin{thm}\label{T2}
Suppose \eqref{A1}, \eqref{A2}, and \eqref{A3}. Let $\A$ denote the infinitesimal generator of the semigroup~$(\mS(t))_{t\ge 0}$.\vspace{2mm}

{\bf (a)} $\psi\in \dom(\A)$ if and only if there exists $\phi\in \E_0$ such that $\psi\in C(J,E_0) \cap \E_0$ is the mild solution to
\begin{equation}\label{psi}
\partial_a\psi = A(a)\psi +\phi(a)\,,\quad a\in J\,,
\end{equation}
with 
\begin{equation}\label{psi0}
\psi(0)=\int_0^{a_m} b(a) \psi(a)\,\rd a\,.
\end{equation}
In this case, $\A \psi = -\phi$.\vspace{2mm}

{\bf (b)} The embedding $D(\A) \hookrightarrow \E_\alpha$ is continuous and dense for  $\alpha\in[0,1)$.\vspace{2mm}

{\bf (c)} If $a_m<\infty$  and if \eqref{A2} is valid also for $\theta=1$, then $\D$
is a core for $D(\A)$. If~$\psi\in \D$, then $\A\psi =-\partial_a\psi +A\psi$.
\end{thm}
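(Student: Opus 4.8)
The backbone of the whole argument is an explicit formula for the resolvent $(\lambda-\A)^{-1}$, which I would obtain by Laplace transforming the semigroup. Fix $\lambda$ with $\mathrm{Re}\,\lambda$ large enough that $\int_0^\infty e^{-\lambda t}\mS(t)\,\rd t$ converges in $\ml(\E_0)$ (possible by the growth bound following from \eqref{E2}); this integral then equals $(\lambda-\A)^{-1}$. Inserting the two branches of \eqref{100} and substituting $\sigma=a-t$ in the first and $s=t-a$ in the second, the $a$-th component becomes
$$
\big[(\lambda-\A)^{-1}\phi\big](a)=\Pi_\lambda(a,0)\,\widehat{B}_\phi(\lambda)+\int_0^a\Pi_\lambda(a,\sigma)\,\phi(\sigma)\,\rd\sigma\,,
$$
with $\widehat{B}_\phi(\lambda)=\int_0^\infty e^{-\lambda s}B_\phi(s)\,\rd s$. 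Laplace transforming \eqref{6a} and using Fubini gives $\widehat B_\phi(\lambda)=\int_0^{a_m}b(a)\,[(\lambda-\A)^{-1}\phi](a)\,\rd a$; feeding the displayed formula back in yields $(1-\Wq(\lambda))\widehat B_\phi(\lambda)=\int_0^{a_m}b(a)\int_0^a\Pi_\lambda(a,\sigma)\phi(\sigma)\,\rd\sigma\,\rd a$, where $\Wq(\lambda):=\int_0^{a_m}b(a)\Pi_\lambda(a,0)\,\rd a$. Since $\|\Wq(\lambda)\|_{\ml(E_0)}\to0$ as $\mathrm{Re}\,\lambda\to\infty$ (by \eqref{EO}, and \eqref{A4} when $a_m=\infty$), the operator $1-\Wq(\lambda)$ is invertible for such $\lambda$, and I obtain the closed-form resolvent~\eqref{inv}.

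With \eqref{inv} at hand, part~{\bf (a)} amounts to two applications of the variation-of-constants representation~\eqref{VdK}. For the ``only if'' direction, put $\phi:=-\A\psi$ and $g:=\lambda\psi-\A\psi$; then $\psi=(\lambda-\A)^{-1}g$ is, by the resolvent formula, the mild solution of $\partial_a\psi=(-\lambda+A(a))\psi+g$ with $\psi(0)=\widehat B_g(\lambda)$, and since $g=\lambda\psi+\phi$ this is exactly \eqref{psi}; the identity $\widehat B_g(\lambda)=\int_0^{a_m}b(a)\psi(a)\,\rd a$ then recovers \eqref{psi0}. For the converse, given a mild solution $\psi$ of \eqref{psi}--\eqref{psi0}, I substitute its representation~\eqref{VdK} (with $x=\psi(0)$ and right-hand side $g:=\lambda\psi+\phi$) into \eqref{psi0}; the boundary condition forces $(1-\Wq(\lambda))\psi(0)=\int_0^{a_m}b(a)\int_0^a\Pi_\lambda(a,\sigma)g(\sigma)\,\rd\sigma\,\rd a$, i.e. $\psi(0)=\widehat B_g(\lambda)$, so $\psi$ and $(\lambda-\A)^{-1}g$ solve the same Cauchy problem and coincide by uniqueness of mild solutions. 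Hence $\psi\in\dom(\A)$ with $(\lambda-\A)\psi=g$, that is $\A\psi=-\phi$.

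For part~{\bf (b)}, continuity I would read off the resolvent directly: from $(\lambda-\A)^{-1}=\int_0^\infty e^{-\lambda t}\mS(t)\,\rd t$ and the bound~\eqref{E3}, whose singularity $t^{-\alpha}$ is integrable at $t=0$ because $\alpha<1$, one gets $(\lambda-\A)^{-1}\in\ml(\E_0,\E_\alpha)$ for $\mathrm{Re}\,\lambda$ large; consequently $\dom(\A)=(\lambda-\A)^{-1}\E_0\subseteq\E_\alpha$ and $\|\psi\|_{\E_\alpha}\le C\,\|\psi\|_{\dom(\A)}$. Density follows from Corollary~\ref{T1B}: the restriction $(\mS_\alpha(t))_{t\ge0}$ is a strongly continuous semigroup on $\E_\alpha$, its generator is the part of $\A$ in $\E_\alpha$, and the domain of that generator is dense in $\E_\alpha$ while being contained in $\dom(\A)$; hence $\dom(\A)$ is dense in $\E_\alpha$.

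Part~{\bf (c)} is an approximation argument, and it is here that \eqref{A2} with $\theta=1$ (together with $a_m<\infty$) is indispensable. First, for $\psi\in\D$ the function $\phi:=\partial_a\psi-A\psi$ lies in $C(J,E_0)\subset\E_0$, and $\psi$ is a classical, hence mild, solution of \eqref{psi}--\eqref{psi0}, so part~{\bf (a)} gives $\psi\in\dom(\A)$ and $\A\psi=-\partial_a\psi+A\psi$. To see that $\D$ is a core, fix $\psi\in\dom(\A)$, a large $\lambda\in\rho(\A)$, and set $g:=(\lambda-\A)\psi$. Choose $g_n\in C(J,E_1)\subset C(J,E_\theta)$ with $g_n\to g$ in $\E_0$ (possible since $a_m<\infty$ and $E_1$ is dense in $E_0$), and put $\psi_n:=(\lambda-\A)^{-1}g_n$. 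Then $\psi_n\to\psi$ and $\A\psi_n=\lambda\psi_n-g_n\to\A\psi$ in $\E_0$, so it only remains to verify $\psi_n\in\D$. By \eqref{strong} the zero-initial-value part $w_n(a):=\int_0^a\Pi_\lambda(a,\sigma)g_n(\sigma)\,\rd\sigma$ lies in $C^1(J,E_0)\cap C(J,E_1)$; then \eqref{A2} with $\theta=1$ ensures $b(\cdot)w_n(\cdot)\in L_1(J,E_1)$, whence $\psi_n(0)=(1-\Wq(\lambda))^{-1}\int_0^{a_m}b(a)w_n(a)\,\rd a\in E_1$, using that $(1-\Wq(\lambda))^{-1}$ preserves $E_1$ for large $\lambda$. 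With $\psi_n(0)\in E_1$ and $g_n\in C(J,E_\theta)$, \eqref{strong} upgrades the mild solution $\psi_n$ to $C^1(J,E_0)\cap C(J,E_1)$, and \eqref{psi0} holds by construction, so $\psi_n\in\D$. The delicate point throughout is exactly this regularity bootstrap for the nonlocal boundary value $\psi_n(0)$: the invertibility of $1-\Wq(\lambda)$ has to be arranged simultaneously on $E_0$ and on $E_1$, which is precisely what the Neumann series for large $\lambda$ delivers once \eqref{A2} is assumed for $\theta=1$.
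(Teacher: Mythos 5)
Your proposal is correct and takes essentially the same route as the paper: the Laplace-transform resolvent formula (the paper's Proposition~\ref{P1}), part~(a) via the variation-of-constants representation and uniqueness of mild solutions, part~(b) from the bound~\eqref{E3} together with Corollary~\ref{T1B}, and part~(c) by approximating the inhomogeneity by $C(J,E_1)$-functions and bootstrapping regularity of the boundary value through the invertibility of $1-Q_\lambda$ on $E_1$ (the content of the paper's Lemma~\ref{AL1}). The only cosmetic deviations are your Neumann-series argument for the invertibility of $1-Q_\lambda$ (the paper cites Lemma~\ref{L0}) and your use of the subspace-semigroup generator in place of the paper's Ces\`aro-mean argument for density in~(b); both are equivalent in substance.
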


That the domain of $\A$ is characterized in Theorem~\ref{T2}~{\bf (a)} in terms of {\it mild} solutions to~\eqref{psi} reflects the hyperbolic part of the operator $-\partial_a +A(a)$ while the regularizing effects stated in Theorem~\ref{T2}~{\bf (b)} are due to its parabolic part. 

For Theorem~\ref{T1} and Theorem~\ref{T2} it suffices that $E_0$ is an ordered Banach space; that is, no lattice property is needed.

\subsection*{Asynchronous Exponential Growth} 

Based on the compactness properties of the semigroup $(\mS(t))_{t\ge 0}$, the characterization of the generator $\A$ from Theorem~\ref{T2} entails information on its spectrum. We shall see that the spectrum is a pure point spectrum and in fact, if $\lambda\in\C$ (with $\mathrm{Re}\, \lambda>\varpi$ if $a_m=\infty$) is an eigenvalue of $\A$ with eigenvector  $\phi\in \mathrm{dom}(\A)\setminus\{0\}$, that is, if \mbox{$(\lambda-\A)\phi=0$}, then 
\begin{equation}\label{kerA}
\phi(a)=\Pi_\lambda(a,0)\phi(0)\,,\quad a\in J\,, \qquad \phi(0)=Q_\lambda \phi(0)\, ,
\end{equation} 
where the operator $Q_\lambda\in\ml(E_0)$ is defined as
\begin{equation}\label{Qlambda}
Q_\lambda :=\int_0^{a_m} b(a)\,  \Pi_\lambda(a,0)\,  \rd a\,.
\end{equation} 
Clearly, \eqref{kerA} implies that $1$ is an eigenvalue of~$Q_\lambda$ with eigenvector $\phi(0)$. The properties of the evolution operator, the compact embeddings of the interpolation spaces, and \eqref{A3}
 entail that $Q_\lambda$ is a compact and (for $\lambda\in\R$) strongly positive operator on  $E_0$. Hence, by the Krein-Rutman Theorem, the spectral radius $r(Q_\lambda)$ is positive and a simple eigenvalue of~$Q_\lambda$. Moreover, 
there is a unique $\lambda_0\in\R$ such that
\begin{equation}\label{lambda0}
r(Q_{\lambda_0})=1
\end{equation}
and there are a quasi-interior point $\zeta_{\lambda_0}$ in $E_0^+$ and a positive functional $\zeta_{\lambda_0}\in E_0'$ with $Q_{\lambda_0}\zeta_{\lambda_0}=\zeta_{\lambda_0}$ respectively  $Q_{\lambda_0}'\zeta_{\lambda_0}'=\zeta_{\lambda_0}'$. Actually, we shall see that $\lambda_0$ is a dominant and simple eigenvalue of~$\A$. This ensures asynchronous exponential growth of the semigroup~$(\mS(t))_{t\ge 0}$ in $\E_0=L_1(J,E_0)$ as stated in the next theorem.

\begin{thm}\label{T3}
Suppose \eqref{A1}, \eqref{A3}, and suppose \eqref{A2}  for  $\theta\in \{0,\vartheta,1\}$. Moreover, if $a_m=\infty$ suppose \mbox{$r(Q_0)\ge 1$}. Let $\lambda_0\in\R$ be as in \eqref{lambda0}. There are $\ve>0$ and~$N\ge 1$ such that
$$
\left\|e^{-\lambda_0 t}\, \mS(t)- P_{\lambda_0}\right\|_{\ml(\E_0)}\le N e^{-\ve t}\,,\quad t\ge 0\,.
$$
Here, $P_{\lambda_0}\in\ml(\E_0)$ is the spectral projection onto $\ker(\lambda_0-\A)$ given by
\begin{equation}\label{PP}
P_{\lambda_0}\phi=\frac{\langle \zeta_{\lambda_0}',H_{\lambda_0}\phi\rangle}{\langle \zeta_{\lambda_0}',\int_0^{a_m}  b(a)\Pi_{\lambda_0}(a,0)\,\zeta_{\lambda_0}\,a\,\rd a\rangle}\Pi_{\lambda_0}(\cdot,0)\zeta_{\lambda_0}\,,\quad \phi\in \E_0\,,
\end{equation}
where 
$$
 H_{\lambda_0}\phi:=\int_0^{a_m}b (a)\, \int_0^a\Pi_{\lambda_0}(a,\sigma)\,\phi(\sigma)\ \rd\sigma\,\rd a\in E_0\,.
 $$
\end{thm}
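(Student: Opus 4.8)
The plan is to show that $\lambda_0$ is a strictly dominant, algebraically simple eigenvalue of $\A$, separated from the rest of the spectrum by a gap, and then to feed this into the standard characterization of asynchronous exponential growth for eventually compact / quasi-compact positive semigroups. First I would reduce the spectral analysis of $\A$ to an eigenvalue problem for the family $Q_\lambda$. By Theorem~\ref{T1} the semigroup is eventually compact if $a_m<\infty$ and quasi-compact if $a_m=\infty$, where the non-compact (transport) part decays like $e^{\varpi t}$ by \eqref{EO}; hence $\sigma(\A)$ consists of isolated eigenvalues of finite algebraic multiplicity, either everywhere (case $a_m<\infty$) or in the half-plane $\{\mathrm{Re}\,\lambda>\varpi\}$ (case $a_m=\infty$, using $\varpi<0$ from \eqref{A4} to bound the essential growth bound). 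In this region \eqref{kerA} shows that $\lambda\in\sigma(\A)$ if and only if $1\in\sigma_p(Q_\lambda)$, with eigenvectors in bijection via $\phi\mapsto\phi(0)$.

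Next I would exploit positivity. Writing $Q_\lambda=\int_0^{a_m}b(a)\,e^{-\lambda a}\,\Pi(a,0)\,\rd a$ as in \eqref{Qlambda}, the pointwise domination $|Q_\lambda z|\le Q_{\mathrm{Re}\,\lambda}|z|$ gives $r(Q_\lambda)\le r(Q_{\mathrm{Re}\,\lambda})$, and the strong positivity assumed in \eqref{A3} makes $\xi\mapsto r(Q_\xi)$ strictly decreasing; this yields uniqueness of $\lambda_0$ in \eqref{lambda0} together with $\lambda_0\in\R$, and in the case $a_m=\infty$ the hypothesis $r(Q_0)\ge 1$ forces $\lambda_0\ge 0>\varpi$, so $\lambda_0$ lies in the region where the spectrum is discrete. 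If $\mathrm{Re}\,\lambda>\lambda_0$ then $r(Q_\lambda)\le r(Q_{\mathrm{Re}\,\lambda})<1$, so $1\notin\sigma(Q_\lambda)$ and $\lambda\notin\sigma(\A)$, giving the half-plane bound at once. The delicate point, which I expect to be the main obstacle, is ruling out peripheral eigenvalues $\lambda=\lambda_0+i\eta$ with $\eta\neq 0$: from $Q_\lambda\psi=\psi$ one gets $|\psi|\le Q_{\lambda_0}|\psi|$, and testing against the strictly positive eigenfunctional $\zeta_{\lambda_0}'$ of $Q_{\lambda_0}'$ forces $Q_{\lambda_0}|\psi|=|\psi|$, so $|\psi|$ is a quasi-interior point by Krein--Rutman; the resulting equality in the triangle inequality, combined with the strong positivity of $b(a)\Pi(a,0)$ on a set of positive measure, then forces the phase $e^{-i\eta a}$ to be constant, whence $\eta=0$.

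Having isolated $\lambda_0$ as the strictly dominant eigenvalue, I would establish its algebraic simplicity by transferring the simplicity of $1$ as an eigenvalue of the compact strongly positive operator $Q_{\lambda_0}$ (Krein--Rutman) through the correspondence between the generalized eigenspaces of $\A$ at $\lambda_0$ and those of $Q_{\lambda_0}$ at $1$; the point to verify is that the ascent equals one, i.e. $\ker(\lambda_0-\A)^2=\ker(\lambda_0-\A)$, so that $\lambda_0$ is a first-order pole of $(\lambda-\A)^{-1}$ with one-dimensional range. Computing the residue of the explicit resolvent formula \eqref{inv} at $\lambda_0$, expressed through the eigenvector $\zeta_{\lambda_0}$ and the eigenfunctional $\zeta_{\lambda_0}'$, then identifies the rank-one spectral projection $P_{\lambda_0}$ as the expression in \eqref{PP}. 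Finally, with $\lambda_0$ a dominant simple eigenvalue, the remaining spectrum satisfies $\sup\{\mathrm{Re}\,\lambda:\lambda\in\sigma(\A)\setminus\{\lambda_0\}\}<\lambda_0$ (a genuine gap, since the spectrum is discrete in $\{\mathrm{Re}\,\lambda>\varpi\}$ and $\varpi<\lambda_0$ controls everything below), and the standard theory of eventually compact / quasi-compact semigroups applies: the spectral mapping theorem makes the growth bound of the restriction $(1-P_{\lambda_0})\mS(t)$ coincide with its spectral bound, which is at most $\lambda_0-2\ve$ for some $\ve>0$. This gives $\|e^{-\lambda_0 t}\,\mS(t)-P_{\lambda_0}\|_{\ml(\E_0)}\le N\,e^{-\ve t}$ for $t\ge 0$, as asserted.
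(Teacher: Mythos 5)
Your overall architecture coincides with the paper's: reduce $\sigma(\A)$ to the family eigenvalue problem $1\in\sigma_p(Q_\lambda)$ via eventual/quasi-compactness (Lemma~\ref{L23}), show that $s(\A)=\lambda_0$ is a dominant and simple eigenvalue (Proposition~\ref{P23}), and then invoke the standard theorems for eventually compact respectively quasi-compact semigroups, identifying $P_{\lambda_0}$ as the residue of the explicit resolvent formula \eqref{inv}. Your half-plane bound via the domination $r(Q_\lambda)\le r(Q_{\mathrm{Re}\,\lambda})$ and the final residue computation are sound. However, at the two critical steps where you depart from the paper, your sketch has genuine gaps.

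First, the peripheral spectrum. Beyond the unremarked need for \emph{strict} positivity of $\zeta_{\lambda_0}'$ (true for the dual eigenfunctional of a compact strongly positive operator, but it must be proved before you can upgrade $|\psi|\le Q_{\lambda_0}|\psi|$ to equality), the decisive step ``equality in the triangle inequality forces the phase $e^{-i\eta a}$ to be constant'' is not routine in the paper's setting, where $E_0$ is an \emph{arbitrary} Banach lattice. From $Q_{\lambda_0+i\eta}\psi=\psi$ and $Q_{\lambda_0}|\psi|=|\psi|$ one obtains $|b(a)\Pi_{\lambda_0}(a,0)\psi|=b(a)\Pi_{\lambda_0}(a,0)|\psi|$ for a.e.~$a$ together with alignment in the $a$-integral, but turning this into a statement about a ``phase'' of $\psi$ requires signum-operator machinery (and typically additional structure such as order continuity of the norm); moreover, strong positivity of $b(a)\Pi(a,0)$ is not injectivity, so even granted phase constancy the final contradiction with $\eta\neq0$ still needs an argument. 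The paper bypasses all of this: by Lemma~\ref{L23} the boundary spectrum $\{\lambda\in\sigma(\A):\mathrm{Re}\,\lambda=s(\A)\}$ is finite, and by positivity of $(\mS(t))_{t\ge 0}$ together with the pole property it is additively cyclic \cite{ClementEtAl}; a finite additively cyclic set must be $\{s(\A)\}$.

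Second, simplicity. ``Transferring the algebraic simplicity of $1\in\sigma(Q_{\lambda_0})$ through the correspondence between generalized eigenspaces'' is not a valid mechanism, because the correspondence that actually holds (Lemma~\ref{L23}) is between \emph{kernels}, i.e.~geometric multiplicities. Indeed, if $\psi\in\ker(\lambda_0-\A)^2$ with $(\lambda_0-\A)\psi=\gamma\,\Pi_{\lambda_0}(\cdot,0)\zeta_{\lambda_0}$, then Theorem~\ref{T2} and \eqref{VdK} give
$$
\psi(a)=\Pi_{\lambda_0}(a,0)\psi(0)+\gamma\,a\,\Pi_{\lambda_0}(a,0)\zeta_{\lambda_0}\,,\quad a\in J\,,
\qquad\text{hence}\qquad
(1-Q_{\lambda_0})\psi(0)=\gamma\int_0^{a_m}a\,b(a)\,\Pi_{\lambda_0}(a,0)\,\zeta_{\lambda_0}\,\rd a\,.
$$
Because of the weight $a$, the right-hand side is \emph{not} an element of $\ker(1-Q_{\lambda_0})=\mathrm{span}\{\zeta_{\lambda_0}\}$, so this is not the generalized-eigenvector equation for $Q_{\lambda_0}$ at $1$, and the algebraic simplicity of $1$ for $Q_{\lambda_0}$ does not by itself rule out a solution $\psi(0)$. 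What forces $\gamma=0$ is positivity: $\zeta_{\lambda_0}'$ annihilates $\mathrm{rg}(1-Q_{\lambda_0})$, whereas by \eqref{A3} the element $\int_0^{a_m}a\,b(a)\Pi_{\lambda_0}(a,0)\zeta_{\lambda_0}\,\rd a$ is a quasi-interior point of $E_0^+$ and therefore pairs strictly positively with $\zeta_{\lambda_0}'$. This positivity argument (implemented in Proposition~\ref{P23} via the shift $p=\tau\varphi+\psi$ and \cite[Corollary~12.4]{DanersKochMedina}) is the idea missing from your plan; you correctly name the target (ascent one) but supply no mechanism that would reach it.
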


If $a_m<\infty$, then $\lambda_0\le\varpi+\|b\|_0 M_0$ and $\lambda_0$ coincides with the growth bound of the semigroup (and with the spectral bound of the generator $\A$), so the estimate \eqref{E2} for $\alpha=0$ can be improved to
\begin{equation*}
\|\mS(t)\|_{\ml(\E_0)}\le N e^{\lambda_0 t}\,,\quad t\ge 0\,,
\end{equation*}
for some $N\ge 1$.\\

As mentioned in the introduction, we shall use Theorem~\ref{T2} in a forthcoming research  to investigate stability properties of equilibria for nonlinear variants of~\eqref{P}. Regarding the linear problem~\eqref{P} an immediate consequence of Theorem~\ref{T3} (and Lemma~\ref{L0} below) are stability properties of the trivial equilibrium in terms of $r(Q_0)$.

\begin{cor}\label{stable}
Suppose \eqref{A1}, \eqref{A3}, and suppose \eqref{A2}  for  $\theta\in \{0,\vartheta,1\}$. Moreover, if $a_m=\infty$ suppose that \mbox{$r(Q_0)\ge 1$}.
\begin{itemize}
\item[{\bf (i)}]  If $r(Q_{0})<1$, then the zero equilibrium to \eqref{P} is globally exponentially asymptotically stable in $\E_0$. 
\item[{\bf (ii)}] If $r(Q_{0})=1$, then  the zero equilibrium to \eqref{P} is stable. Moreover, the solution $u$ to  \eqref{P} with $\phi\in \E_0$ converges exponentially toward an equilibrium.
\item[{\bf (iii)}] If $r(Q_{0})>1$, then  the zero equilibrium to \eqref{P} is unstable. More precisely, the solution~$u$ to  \eqref{P} with $\phi\in \E_0$ is asymptotic to the stable age distribution $e^{\lambda_0 t} P_{\lambda_0}\phi $ with $\lambda_0>0$ satisfying \eqref{lambda0} and $P_{\lambda_0}\phi$ being given by \eqref{PP}.
\end{itemize}
\end{cor}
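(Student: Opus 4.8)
The plan is to read the three cases directly off the asynchronous-growth estimate of Theorem~\ref{T3}, using the monotone dependence of $r(Q_\lambda)$ on the real parameter $\lambda$ (the content of Lemma~\ref{L0}) to convert each hypothesis on $r(Q_0)$ into the sign of $\lambda_0$. The bridging fact I would record first is that $\lambda\mapsto r(Q_\lambda)$ is decreasing on $\R$: since $\Pi_\lambda(a,0)=e^{-\lambda a}\Pi(a,0)$ and $\Pi(a,0)$ is positive, one has $Q_{\lambda_1}\ge Q_{\lambda_2}\ge 0$ in the order of positive operators whenever $\lambda_1<\lambda_2$, whence $r(Q_{\lambda_1})\ge r(Q_{\lambda_2})$; strong positivity from \eqref{A3} together with the Krein--Rutman theorem makes this strict and yields the uniqueness of $\lambda_0$ in \eqref{lambda0}. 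Combining $r(Q_{\lambda_0})=1$ with monotonicity gives the dictionary
$$
r(Q_0)<1\iff\lambda_0<0\,,\qquad r(Q_0)=1\iff\lambda_0=0\,,\qquad r(Q_0)>1\iff\lambda_0>0\,.
$$
When $a_m=\infty$, the standing hypothesis $r(Q_0)\ge 1$ forces $\lambda_0\ge 0>\varpi$, so $\lambda_0$ lies in the half-plane where Theorem~\ref{T3} applies (and case \textbf{(i)} cannot occur for $a_m=\infty$).

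For \textbf{(i)} and \textbf{(iii)} I would argue uniformly from Theorem~\ref{T3}, which supplies $N\ge 1$ and $\ve>0$ with
$$
\big\|\mS(t)-e^{\lambda_0 t}P_{\lambda_0}\big\|_{\ml(\E_0)}\le N\,e^{(\lambda_0-\ve)t}\,,\qquad t\ge 0\,.
$$
If $r(Q_0)<1$, then $\lambda_0<0$, so $\|\mS(t)\|_{\ml(\E_0)}\le(\|P_{\lambda_0}\|_{\ml(\E_0)}+N)\,e^{\lambda_0 t}$ decays exponentially, which is exactly the global exponential asymptotic stability claimed in \textbf{(i)}. If $r(Q_0)>1$, then $\lambda_0>0$, and dividing the estimate by $e^{\lambda_0 t}$ shows $e^{-\lambda_0 t}\mS(t)\phi\to P_{\lambda_0}\phi$ at rate $e^{-\ve t}$; hence $u(t)=\mS(t)\phi$ is asymptotic to the growing profile $e^{\lambda_0 t}P_{\lambda_0}\phi$ with $P_{\lambda_0}\phi$ as in \eqref{PP}, which is unbounded whenever $P_{\lambda_0}\phi\ne 0$, so the zero state is unstable, giving \textbf{(iii)}.

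For the borderline case \textbf{(ii)}, $r(Q_0)=1$ forces $\lambda_0=0$, and Theorem~\ref{T3} reduces to $\|\mS(t)-P_0\|_{\ml(\E_0)}\le N e^{-\ve t}$, so $\mS(t)\phi\to P_0\phi$ exponentially for every $\phi\in\E_0$. The one point I would treat carefully is the identification of the limit as an equilibrium: $P_0$ is the spectral projection onto $\ker(\lambda_0-\A)=\ker\A$, and by Theorem~\ref{T2}~\textbf{(a)} a function $\psi\in\ker\A$ is precisely a mild solution of $\partial_a\psi=A(a)\psi$ subject to \eqref{psi0}, i.e. a steady state of \eqref{P}; thus $\operatorname{range}(P_0)$ consists of equilibria and $P_0\phi$ is the equilibrium to which the solution converges. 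The uniform bound $\|\mS(t)\|_{\ml(\E_0)}\le\|P_0\|_{\ml(\E_0)}+N$ then yields Lyapunov stability of the zero state and completes \textbf{(ii)}.

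Once Theorem~\ref{T3} and the sign dictionary are in hand the computation is routine, so I do not expect a serious obstacle. The only genuinely substantive ingredient is the (strict) monotonicity of $r(Q_\lambda)$ and the resulting sign correspondence for $\lambda_0$, which is where the positivity assumption~\eqref{A3} and Krein--Rutman theory enter and which is isolated in Lemma~\ref{L0}; the minor points requiring a word are the identification of $\operatorname{range}(P_0)$ with equilibria in \textbf{(ii)} and the reason case \textbf{(i)} is asserted only in the regime $\lambda_0>\varpi$ when $a_m=\infty$.
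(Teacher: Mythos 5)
Your proposal is correct and follows exactly the route of the paper, whose proof of Corollary~\ref{stable} consists precisely of invoking Theorem~\ref{T3} together with Lemma~\ref{L0}: the strict monotonicity of $\lambda\mapsto r(Q_\lambda)$ gives the sign dictionary for $\lambda_0$, and the asynchronous-growth estimate then yields the three cases. Your additional care about identifying $\operatorname{rg}(P_0)$ with equilibria via Theorem~\ref{T2}~\textbf{(a)} and about why case \textbf{(i)} is excluded when $a_m=\infty$ is a faithful elaboration of what the paper leaves implicit.
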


Theorem~\ref{T3} can also be used to investigate the asynchronous exponential growth for semilinear equations. Indeed, consider
\begin{subequations}\label{P01} 
\begin{align}
\partial_t u+ \partial_au \, &=     A(a)u -m(u,a)u \,, \qquad t>0\, ,\quad a\in (0,a_m)\, ,\label{1a}\\ 
u(t,0)&=\int_0^{a_m}b(a)\, u(t,a)\, \rd a\,, \qquad t>0\, , \\
u(0,a)&=  \phi(a)\,, \qquad a\in (0,a_m)\,, 
\end{align}
\end{subequations}
with a semilinear term on the right-hand side of \eqref{1a} (representing a nonlinear death process) and suppose that the function $m=m(u,a)$ satisfies 
\begin{equation}\label{B1}
\begin{split} 
&m: \E_0\rightarrow L_\infty\big(J,\mathcal{L}(E_0)\big)\,,\ u\mapsto m(u,\cdot)\ \text{is uniformly Lipschitz continuous} \\
&\text{on bounded sets and}\ \|m(u,\cdot)\|_{L_\infty(J,\mathcal{L}(E_0))}\le f(\|u\|_{\E_0}) \text{ for } u\in \E_0\,,
\end{split}
\end{equation}
with a function $f$ such that
\begin{equation}\label{B2}
\begin{split} 
&f :\R^+\rightarrow\R^+ \ \text{ is non-increasing and }\ \displaystyle\int_{r_0}^\infty \frac{f(r)}{r}\,\rd r <\infty \text{ for } r_0>0\,.
\end{split}
\end{equation}
Then, given $\phi\in \E_0$, there is a unique  mild solution $u\in C(\R^+,\E_0)$ to \eqref{P01}. We introduce the nonlinear semigroup $\mathbb{U}$ by setting $\mathbb{U}(t)\phi:=u(t)$ and put
\begin{equation}\label{pp}
\mathbb{P}_{\lambda_0}(\phi):= P_{\lambda_0}\left( \phi+\int_0^\infty e^{-\lambda_0 s} F\big(\mathbb{U}(s)\phi\big)\,\rd s \right)\,,
\end{equation}
where $F(v):=-m(v,\cdot)v$. Then \cite[Theorem 1.1, Theorem 1.3]{GyllenbergWebb} implies:

\begin{cor}\label{C14}
Suppose \eqref{A1}, \eqref{A2} for  $\theta\in \{0,\vartheta,1\}$, \eqref{A3},   \eqref{B1}, and \eqref{B2}. Let $r(Q_0)>1$ so that  $\lambda_0>0$ in~\eqref{lambda0}.  There are $\ve>0$ and $N\ge 1$ such that
$$
\|e^{-\lambda_0 t}\, \mathbb{U}(t)\phi- \mathbb{P}_{\lambda_0}(\phi)\|_{\ml(\E_0)}\le N e^{-\ve t}\|\phi\|_{\E_0}\,,\quad t\ge 0\,,
$$
where  $\mathbb{U}(\cdot)\phi\in C(\R^+,\E_0)$ denotes the mild solution to~\eqref{P01} with $\phi\in\E_0$ and $\mathbb{P}_{\lambda_0}$ is defined in~\eqref{pp}.
\end{cor}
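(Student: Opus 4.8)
The plan is to view \eqref{P01} as a semilinear perturbation of the linear semigroup $(\mS(t))_{t\ge 0}$ and to deduce the claim from the abstract asynchronous-exponential-growth theory of \cite{GyllenbergWebb}; the argument reduces to verifying that its structural hypotheses hold here. Writing $F(v):=-m(v,\cdot)v$, the global mild solution $\mathbb{U}(t)\phi$ provided in the preamble solves the variation-of-constants identity
\begin{equation*}
\mathbb{U}(t)\phi=\mS(t)\phi+\int_0^t\mS(t-s)\,F\big(\mathbb{U}(s)\phi\big)\,\rd s\,,\qquad t\ge 0\,,
\end{equation*}
where $F:\E_0\to\E_0$ is well defined and Lipschitz continuous on bounded sets by \eqref{B1} and $(\mS(t))_{t\ge 0}$ is strongly continuous on $\E_0$ by Theorem~\ref{T1}\,{\bf (a)}.

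Next I would check the hypothesis on the linear flow. Theorem~\ref{T3} provides the balanced exponential growth $\|e^{-\lambda_0 t}\mS(t)-P_{\lambda_0}\|_{\ml(\E_0)}\le Ne^{-\ve t}$, where $P_{\lambda_0}$ is the spectral projection \eqref{PP} onto the one-dimensional space $\ker(\lambda_0-\A)$, spanned by $\Pi_{\lambda_0}(\cdot,0)\zeta_{\lambda_0}$, and $\lambda_0$ is the dominant simple eigenvalue of $\A$. The assumption $r(Q_0)>1$ together with \eqref{lambda0} yields $\lambda_0>0$, as recorded in the statement. Thus the rescaled linear semigroup $e^{-\lambda_0 t}\mS(t)$ converges, with a uniform exponential gap, to a rank-one projection, which is exactly the configuration required in \cite{GyllenbergWebb}.

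It remains to confirm that the nonlinearity meets the growth hypothesis of \cite{GyllenbergWebb}, which is precisely what \eqref{B1}--\eqref{B2} encode. From \eqref{B1},
\begin{equation*}
\|F(v)\|_{\E_0}=\int_0^{a_m}\|m(v,a)\,v(a)\|_{E_0}\,\rd a\le\|m(v,\cdot)\|_{L_\infty(J,\ml(E_0))}\,\|v\|_{\E_0}\le f(\|v\|_{\E_0})\,\|v\|_{\E_0}\,,
\end{equation*}
so $\|F(v)\|_{\E_0}/\|v\|_{\E_0}\le f(\|v\|_{\E_0})$ with $f$ non-increasing and $\int_{r_0}^\infty f(r)/r\,\rd r<\infty$ by \eqref{B2}. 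Feeding these bounds and the balanced growth of $(\mS(t))_{t\ge 0}$ into \cite[Theorem 1.1, Theorem 1.3]{GyllenbergWebb} gives that $e^{-\lambda_0 t}\mathbb{U}(t)\phi$ converges in $\E_0$, at rate $e^{-\ve t}$, to a point in the range of $P_{\lambda_0}$. To identify the limit I would apply $P_{\lambda_0}$ to the variation-of-constants identity: decomposing $e^{-\lambda_0(t-s)}\mS(t-s)=P_{\lambda_0}+R(t-s)$ with $\|R(\tau)\|_{\ml(\E_0)}\le Ne^{-\ve\tau}$ and letting $t\to\infty$, the first term tends to $P_{\lambda_0}\phi$, the $P_{\lambda_0}$-part of the integral to $P_{\lambda_0}\int_0^\infty e^{-\lambda_0 s}F(\mathbb{U}(s)\phi)\,\rd s$, and the $R$-part to $0$, so that the limit equals $\mathbb{P}_{\lambda_0}(\phi)$ as in \eqref{pp}.

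The main obstacle is to match the present system faithfully to the abstract framework of \cite{GyllenbergWebb} and, within that, to justify the convergence of the improper integral defining $\mathbb{P}_{\lambda_0}(\phi)$ together with the transfer of the exponential rate through the nonlinear term. This is where \eqref{B2} is essential: since $\|\mathbb{U}(s)\phi\|_{\E_0}$ is comparable to $e^{\lambda_0 s}$ as $s\to\infty$, the bound $\|F(v)\|_{\E_0}\le f(\|v\|_{\E_0})\|v\|_{\E_0}$ together with the substitution $r=e^{\lambda_0 s}$ controls $\int_0^\infty e^{-\lambda_0 s}\|F(\mathbb{U}(s)\phi)\|_{\E_0}\,\rd s$ by a constant multiple of $\int_1^\infty f(r)/r\,\rd r<\infty$. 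This simultaneously makes $\mathbb{P}_{\lambda_0}(\phi)$ well defined and renders the nonlinear contribution asymptotically negligible relative to the dominant mode $e^{\lambda_0 t}$, which is the crux of the Gyllenberg--Webb conclusion.
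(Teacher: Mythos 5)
Your proposal is correct and follows essentially the same route as the paper: the paper's proof is precisely the reduction to \cite[Theorem 1.1, Theorem 1.3]{GyllenbergWebb}, using the asynchronous exponential growth of $(\mS(t))_{t\ge 0}$ from Theorem~\ref{T3} and the structural hypotheses \eqref{B1}--\eqref{B2} on the nonlinearity. Your verification of the variation-of-constants identity, the rank-one limit $P_{\lambda_0}$, the bound $\|F(v)\|_{\E_0}\le f(\|v\|_{\E_0})\|v\|_{\E_0}$, and the identification of the limit as $\mathbb{P}_{\lambda_0}(\phi)$ simply makes explicit the hypothesis-checking that the paper leaves implicit in the preamble to the corollary.
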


\subsection*{Outline}  Section~\ref{Sec3} is dedicated to the proof of the properties of the semigroup $(\mS(t)_{t\ge 0}$ as stated in Theorem~\ref{T1} and Corollary~\ref{T1B}.  The characterization of its generator $\A$ as stated in Theorem~\ref{T2} is provided in Section~\ref{Sec4}. It relies on the explicit formula~\eqref{inv} for the resolvent derived from its representation as the Laplace transform of the semigroup. Using the precise characterization of~$\A$ and the compactness property of $(\mS(t))_{t\ge 0}$, we then investigate in Section~\ref{Sec5} the spectrum of~$\A$ and show, in particular, that~$\lambda_0$ is a dominant and simple eigenvalue of $\A$. This, in turn, implies Theorem~\ref{T3} as well as Corollary~\ref{stable} and Corollary~\ref{C14}.  Finally, in the Appendix~\ref{Sec6} we sketch the proof of the existence of $B_\phi$ occurring in \eqref{500}.


\section{The Semigroup $(\mS(t))_{t\ge 0}$: Proofs of Theorem~\ref{T1} and Corollary~\ref{T1B}}\label{Sec3}

Suppose \eqref{A1} and \eqref{A2}.  As mentioned in the previous section, part {\bf (a)} of Theorem~\ref{T1} is mostly known. Indeed, it can be shown \cite[Lemma 2.1]{WalkerMOFM}  (see Lemma~\ref{L333} below) that there exists a mapping 
\begin{equation}\label{BB}
[\phi\mapsto B_\phi]\in\ml \big(\E_0,
C(\R^+,E_0)\big)
\end{equation}
such that $B_\phi$ is the unique solution to~\eqref{500}, and if $b\in L_{\infty}\big(J,\ml_+(E_0)\big)$  and $\phi\in \E_0^+$, then $B_\phi(t)\in E_0^+$ for $t\ge 0$. Based on this the proof that $(\mS(t))_{t\ge 0}$ defines a strongly continuous  positive  semigroup on $\E_0=L_1(J,E_0)$ is the same as in \cite[Theorem 4]{WebbSpringer} (proved for the case that $A$ is independent of age) to which we refer. 

\subsection*{Proof of Estimates \eqref{E3} and \eqref{E2}}

As for \eqref{E3} let $\phi\in \E_0$ and $\alpha\in [0,1)$. We note from \eqref{500}, \eqref{A2},  and~\eqref{EO}  that, for $\theta\in\{0,\vartheta\}$,  
$$
e^{-\varpi t}\| B_\phi(t)\|_{E_\theta}\le \|b\|_\theta\, M_\theta\,\int_0^te^{-\varpi a}\| B_\phi(a)\|_{E_\theta}\,\rd a +\|b\|_\theta\, M_\theta\, t^{-\theta}\,\|\phi\|_{\E_0}\,,\quad t>0\,,
$$
so that the singular Gronwall inequality~\cite[II.~Corollary 3.3.2]{LQPP} implies the existence of $c_\theta>0$ (with $c_0=1$) such that 
\begin{equation}\label{p1}
e^{-\varpi t}\| B_\phi(t)\|_{E_\theta}\le c_\theta\,\|b\|_\theta\, M_\theta\,t^{-\theta}\,\|\phi\|_{\E_0} e^{(1+\theta) \|b\|_\theta M_\theta t}  \,,\quad t>0\,,\quad \theta\in\{0,\vartheta\}\,.
\end{equation}
Thus, it follows from \eqref{100}, \eqref{EO}, and \eqref{p1} (with $\theta=0$) that
\begin{align*}
\|\mS(t)\phi\|_{\E_\alpha}&\le \int_0^t \|\Pi(a,0)\|_{\ml(E_0,E_\alpha)}\,\|B_\phi(t-a)\|_{E_0}\,\rd a\\
&\qquad +\int_t^{a_m} \|\Pi(a,a-t)\|_{\ml(E_0,E_\alpha)}\,\|\phi(a-t)\|_{E_0}\,\rd a\\
&\le  M_\alpha\,\|b\|_0\,M_0\,\|\phi\|_{\E_0} e^{(\varpi+\|b\|_0 M_0)t}\int_0^t e^{-\|b\|_0 M_0 a}\, a^{-\alpha}\,\rd a + M_\alpha\,e^{\varpi t}\, t^{-\alpha}\,\|\phi\|_{\E_0}\\
&=  
M_\alpha\, e^{\varpi t}\,\left(\frac{\gamma\big(\|b\|_0 M_0 t,1-\alpha\big)}{(\|b\|_0 M_0)^{\alpha}}\, e^{\|b\|_0 M_0 t}+t^{-\alpha}\right)\,\|\phi\|_{\E_0}\,,
\end{align*}
where we used implicitly that $t\le a_m$ for splitting the integral though the final estimate remains true for $t>a_m$, of course. This is \eqref{E3}. Since
$$
\gamma(x,1)=1-e^{-x}\,,\quad x\ge 0\,,
$$
we also obtain~\eqref{E2} for $\alpha=0$. The case $\theta=\vartheta$ of~\eqref{E2} one shows analogously by replacing~\eqref{p1} with the estimate
$$
e^{-\varpi t}\| B_\phi(t)\|_{E_\vartheta}\le \|b\|_\vartheta\, M_\vartheta\,\|\phi\|_{\E_\vartheta} e^{\|b\|_\vartheta M_\vartheta t}  \,,\quad t\ge 0\,,
$$
which also follows from Gronwall's inequality. \qed

\subsection*{Proof of Eventual Compactness when $a_m<\infty$} In order to prove that $\mS(t)$ is compact for $t>2a_m$,  we use Kolmogorov's compactness criterion \cite[Theorem A.1]{GutmanSIMA}. To this end, let $\mathcal{B}$ be a bounded subset of $\E_0$ and fix $t>2a_m$. Clearly, $\mS(t)\mathcal{B}$ is bounded in $\E_0$. Let $\phi\in\mathcal{B}$ and $h>0$. Note from \cite[II. Equation (5.3.8)]{LQPP} that, since $\vartheta\in (0,1)$, there is $c_1=c_1(a_m)>0$ with
$$
\| \Pi(a+h,0)-\Pi(a,0)\|_{\ml(E_{\vartheta},E_0)}\le c_1\, h^{\vartheta}\, ,\quad a+h\in J\,,
$$
and that \eqref{p1} implies the existence of $c_2=c_2(t,\mathcal{B})>0$ with
$$
\|B_\phi(t-a-h)\|_{E_{\vartheta}}\le c_2\, (t-a-h)^{-\vartheta}\,,\quad a+h\in J\,.
$$
Therefore, we infer from these observations along with \eqref{EO} and \eqref{100} that
\begin{equation*}
\begin{split}
\int_0^{a_m} \| (\widetilde{\mS(t)\phi})(a&+h)-(\mS(t)\phi)(a)\|_{E_0}\, \rd a \\
& \le  \int_0^{a_m-h} \| \Pi(a+h,0)-\Pi(a,0)\|_{\ml(E_{\vartheta},E_0)}\, \|B_\phi(t-a-h)\|_{E_{\vartheta}}\, \rd a \\ 
& \qquad + \int_0^{a_m-h} \|\Pi(a,0)\|_{\ml(E_0)}\, \|B_\phi(t-a-h)-B_\phi(t-a)\|_{E_0}\, \rd a\\ 
& \qquad + \int_{a_m-h}^{a_m} \|\Pi(a,0)\|_{\ml(E_0)}\, \|B_\phi(t-a)\|_{E_0}\, \rd a\\
&\le c_1 c_2 h^{\vartheta} \int_0^{a_m-h} (t-a-h)^{-{\vartheta}}\,\rd a\\
&\qquad+ M_0\, e^{\vert\varpi\vert a_m}\,\int_{t+h-a_m}^t \|B_\phi(s-h)-B_\phi(s)\|_{E_0}\,\rd s
\\
&\qquad+ M_0\, e^{\vert\varpi\vert a_m}\,\int_{t-a_m}^{t+h-a_m} \|B_\phi(s)\|_{E_0}\,\rd s
\end{split}
\end{equation*}
with tilde indicating the trivial extension.
As for the last two terms on the right-hand side of this estimate, by \eqref{BB} there is $c_3=c_3(t,\mathcal{B})>0$ with
$$
\|B_\phi(a)\|_{E_0}\le c_3\,,\quad a\in J\,,
$$
and we thus infer from \eqref{500} that, for $a_m+h<t+h-a_m\le s\le t$,
\begin{equation*}
\begin{split}
  \|&B_\phi(s-h)-   B_\phi(s)\|_{E_0}\\
	&\le \Big\|\int_{s-h-a_m}^{s-h} b(s-h-a)\, \Pi(s-h-a,0)B_\phi(a)\, \rd a -\int_{s-a_m}^s b(s-a)\, \Pi(s-a,0)B_\phi(a)\, \rd a \Big\|_{E_0}\\
	&\le \int_{s-h-a_m}^{s-h} \big\|b(s-h-a)\, \Pi(s-h-a,0) -b(s-a)\, \Pi(s-a,0)\big\|_{\ml(E_0)}\,\| B_\phi(a)\|_{E_0}\, \rd a\\
	&\qquad +\left(\int_{s-h}^{s}+\int_{s-h-a_m}^{s-a_m}\right)\big\| b(s-a)\, \Pi(s-a,0)\big\|_{\ml(E_0)}\,\| B_\phi(a)\|_{E_0}\, \rd a\\
&\le c_3\int_{s-h}^{s-h-a_m} \big\|b(s-h-a)\, \Pi(s-h-a,0) -b(s-a)\, \Pi(s-a,0)\big\|_{\ml(E_0)}\,\rd a\\
&\qquad  +c_3 \left(\int_{s-h}^{s}+\int_{s-h-a_m}^{s-a_m}\right)\big\| b(s-a)\, \Pi(s-a,0)\big\|_{\ml(E_0)}\, \rd a\,.
\end{split}
\end{equation*}
Noticing that \eqref{A2} and \eqref{EO} imply
$$
\big[ a\mapsto b(a)\Pi(a,0)\big]\in L_1(J,\ml(E_0))\,,
$$
we conclude that
\begin{equation}\label{G1}
\lim_{h\to 0} \int_0^{a_m} \| (\widetilde{\mS(t)\phi})(a+h)-(\mS(t)\phi)(a)\|_{E_0}\, \rd a=0\,,\ \text{ uniformly w.r.t. $\phi\in\mathcal{B}$}\,.
\end{equation}
 Next, \eqref{500} and \eqref{p1} (with $\theta=0$) entail that
$$
\|(\mS(t)\phi)(a)\|_{E_{\vartheta}}\le \|\Pi(a,0)\|_{\ml(E_0,E_{\vartheta})}\, \| B_\phi(t-a)\|_{E_0}\le  c(t,\mathcal{B})\, a^{-\vartheta}\,,\quad a\in (0,a_m)\,.
$$
Given $\varepsilon>0$ let $R_\varepsilon$ be the $E_0$-closure of the ball in $E_{\vartheta}$ centered at $0$ of radius $c(t,\mathcal{B})\varepsilon^{-{\vartheta}}$. Then $R_\varepsilon$ is compact in $E_0$ due to the compact embedding of $E_{\vartheta}$ in $E_0$ and
\begin{equation}\label{G2}
(\mS(t)\phi)(a)\in R_\varepsilon\, ,\quad a\in J\setminus[0,\ve]\, ,\quad \phi\in \mathcal{B}\, .
\end{equation}
Therefore, \cite[Theorem A.1]{GutmanSIMA} along with \eqref{G1} and \eqref{G2} imply that $\mS(t)B$ is relatively compact in~$\E_0$. This completes the proof of Theorem~\ref{T1}.\qed

\subsection*{Proof of Quasi-Compactness when $a_m=\infty$} Given $t\ge 0$ and $\phi\in\E_0$ define
$$
     \big[K(t) \phi\big](a)\, :=\, \left\{ \begin{aligned}
    &0\, ,& & 0\le t\le a<\infty\, ,\\
    & \Pi(a,0)\, B_\phi(t-a)\, ,& & 0\le a<t<\infty\, .
    \end{aligned}
   \right.
$$
Then
$$
\| \mS(t)\phi-K(t)\phi\|_{\E_0}=\int_t^\infty \|\Pi(a,a-t)\, \phi(a-t)\|_{E_0}\,\rd a\le M_0\, e^{\varpi t}\,\|\phi\|_{\E_0} \rightarrow 0
$$
as $t\to\infty$ according to \eqref{A4}. Moreover, it is easy to adapt the proof above (for the case $a_m<\infty$) to derive from Kolmogorov's compactness criterion \cite[Theorem A.1]{GutmanSIMA} that $K(t)\in\ml(\E_0)$ is a compact operator for each $t>0$. Thus, the semigroup $(\mS(t))_{t\ge 0}$ is quasi-compact (in the sense of \cite[V.~Definition~3.4]{EngelNagel}).\qed \vspace{2mm}

At this stage the proofs of parts {\bf (a)} and  {\bf (b)} of Theorem~\ref{T1} are complete and it only remains to prove part {\bf (c)}. In the following, $\A$ denotes the infinitesimal generator of the semigroup~$(\mS(t))_{t\ge 0}$.

\subsection*{Proof of Theorem~\ref{T1}~(c)}

Let $\alpha\in [0,1)$ and consider $\B\in\ml(\E_\alpha,\E_0)$. We shall see later in Corollary~\ref{C21} that $D(\A)$  is continuously embedded into $\E_\alpha$ so that $\A+\B$ with $\dom(\A+\B):=\dom(\A)$ is well-defined. Recall from \eqref{E3} that there are $c_1>0$ and $\omega_1>0$ such that
\begin{equation}\label{S31}
\|\mS(t)\|_{\ml(\E_0,\E_\alpha)}\le  c_1\, e^{\omega_1 t}\, t^{-\alpha}\,,\quad t>0\,.
\end{equation}
Hence, if $t_0\in (0,1)$ and $\phi\in \E_0$, then
\begin{equation*}
\begin{split}
\int_0^{t_0}\|\B\,\mS(t)\phi\|_{\E_0}\,\rd t &\le \int_0^{t_0}\|\B\|_{\ml(\E_\alpha,\E_0)}\,\|\mS(t)\|_{\ml(\E_0,\E_\alpha)}\,\|\phi\|_{\E_0}\,\rd t\\
&\le c_2\,\|\B\|_{\ml(\E_\alpha,\E_0)}\, t_0^{1-\alpha}\,\|\phi\|_{\E_0}\,.
\end{split}
\end{equation*}
Consequently, there are $t_0,q\in (0,1)$ such that
\begin{equation*}
\begin{split}
\int_0^{t_0}\|\B\,\mS(t)\phi\|_{\E_0}\,\rd t &\le q\,\|\phi\|_{\E_0}\,,\quad \phi\in\E_0\,.
\end{split}
\end{equation*}
We are thus in a position to apply the Miyadera-Voigt perturbation theorem~\cite[III.~Corollary 3.16]{EngelNagel} and conclude that $\A+\B$ generates a strongly continuous semigroup~$(\T(t))_{t\ge 0}$ on $\E_0$
satisfying 
\begin{equation}\label{E213}
\T(t)\phi=\mS(t)\phi+\int_0^t\mS(t-s)\,\B\, \T(s)\,\phi\,\rd s\,,\quad t\ge 0\,,\quad\phi\in D(\A)\,.
\end{equation}
If $\phi\in D(\A)$, then $\T(\cdot)\phi\in C(\R^+,D(\A))\hookrightarrow C(\R^+,\E_\alpha)$, so that \eqref{E213} and \eqref{S31} entail
$$
\|\T(t)\phi\|_{\E_\alpha}\le c_1\, e^{\omega_1 t}\, t^{-\alpha}\,\|\phi\|_{\E_0}+c_1\,\|\B\|_{\ml(\E_\alpha,\E_0)}\int_0^t e^{\omega_1 (t-s)}\, (t-s)^{-\alpha}\, \|\T(s)\phi\|_{\E_\alpha}\,\rd s\,,\quad t>0\,.
$$
The singular Gronwall inequality~\cite[II.~Corollary 3.3.2]{LQPP} now implies that there are $\N_\alpha\ge 1$ and $\varsigma_\alpha\in\R$ such that
$$
\|\T(t)\phi\|_{\E_\alpha}\le N_\alpha\, e^{\varsigma_\alpha t}\, t^{-\alpha}\,\|\phi\|_{\E_0}\,,\quad t> 0\,,\quad \phi\in D(\A)\,.
$$
The density of $D(\A)$ in $\E_0$ yields \eqref{E3T}. Moreover, in combination with \eqref{E3T}, \eqref{E213}, and Lebesgue's theorem, the density of $D(\A)$ in $\E_0$ also entails~\eqref{E33T}.

Suppose now that $\B\phi\in\E_0^+$ for $\phi\in\E_\alpha^+$ and take $\lambda>0$ sufficiently large. Then $\lambda-\A$ and $\lambda-\A-\B$ are invertible with
\begin{equation*}
(\lambda-\A-\B)^{-1}= (\lambda-\A)^{-1}\big(1-\B(\lambda-\A)^{-1}\big)^{-1}= (\lambda-\A)^{-1}\sum_{j=0}^\infty \big[\B(\lambda-\A)^{-1}\big]^j\,,
\end{equation*}
where the proof of \cite[III.~Theorem 3.14]{EngelNagel} shows that the Neumann series converges since $\B$ is a Miyadera-Voigt perturbation of $\A$. This formula together with the positivity of $\B$ and the fact that $\A$ is resolvent positive since the semigroup $(\mS(t))_{t\ge 0}$ is positive imply that $\A+\B$ is resolvent positive. Hence, the semigroup~$(\T(t))_{t\ge 0}$ is positive. This proves part~{\bf (c)} of Theorem~\ref{T1}.\qed

\medskip

\subsection*{Proof of Corollary~\ref{T1B}}

Let $\alpha\in [0,1)$ and recall from \eqref{E3} and $\E_\alpha\hookrightarrow\E_0$ that $\mS_\alpha(t)\in\ml(\E_\alpha)$ for $t\ge 0$, where $\mS_\alpha(t)$  is the restriction of $\mS(t)$ to $\E_\alpha=L_1(J,E_\alpha)$. Thus, in order to prove that $(\mS_\alpha(t))_{t\ge 0}$  is a strongly continuous positive semigroup on $\E_\alpha$ it suffices to prove the strong continuity. To this end, let $\phi\in\E_\alpha$.
We then obtain from  \eqref{100}  and \eqref{EO} for $t\in (0,a_m)$ that
\begin{equation*}
\begin{split}
\|\mS(t)\phi-\phi\|_{\E_\alpha}&\le \int_0^t \|\Pi(a,0)\|_{\ml(E_0,E_\alpha)}\,\| B_\phi(t-a)\|_{E_0}\,\rd a
+\int_0^t \|\Pi(a,0)\|_{\ml(E_\alpha)}\,\| \phi(a)\|_{E_\alpha}\,\rd a
\\
&\quad  +\int_t^{a_m}\|\Pi(a,a-t)\|_{\ml(E_\alpha)}\,\|\phi(a-t)-\phi(a)\|_{E_\alpha}\,\rd a\\
&\quad  +\int_t^{a_m}\|\Pi(a,a-t)\,\phi(a)-\phi(a)\|_{E_\alpha}\,\rd a\\
&\le M_\alpha \int_0^t e^{\varpi a}a^{-\alpha}\,\| B_\phi(t-a)\|_{E_0}\,\rd a
+M_\alpha \int_0^t e^{\varpi a}\,\|\phi(a)\|_{E_\alpha}\,\rd a \\
&\quad  +M_\alpha e^{\varpi t} \int_0^{a_m}\|\phi(a-t)-\phi(a)\|_{E_\alpha}\,\rd a\\
&\quad  +\int_t^{a_m}\|\Pi(a,a-t)\,\phi(a)-\phi(a)\|_{E_\alpha}\,\rd a\,.
\end{split}
\end{equation*}
As $t\to 0$, the first and the second term on the right-hand side converge to zero due to \eqref{p1} (with $\theta=0$) and $\phi\in\E_\alpha$,  the third term converges to zero since translations are strongly continuous on $\E_\alpha$, and the last term goes to zero due to Lebesgue's theorem.
This proves the strong continuity of $(\mS_\alpha(t))_{t\ge 0}$. That this semigroup in $\E_\alpha$ with $\alpha\in [0,\vartheta)$ is eventually compact if $a_m<\infty$ respectively quasi-compact if $a_m=\infty$ one shows as above (for the case $\E_0$) using the fact that $E_{\vartheta}$ embeds compactly in $E_\alpha$ for $0\le \alpha < \vartheta\le 1$. This yields Corollary~\ref{T1B}.\qed

\section{The Generator $\A$: Proof of Theorem~\ref{T2}}\label{Sec4}

We next turn to the identification of the generator $\A$ of the semigroup $(\mS(t)_{t\ge 0}$ which is crucial for what follows. To this end, suppose \eqref{A1}, \eqref{A2}, and \eqref{A3}.

\subsection*{Resolvent Representation Formula} 
In the following,
$$
I:=\R\ \text{ if }\ a_m<\infty\,,\qquad I:=(\varpi,\infty)\ \text{ if }\ a_m=\infty\,.
$$
Recall that the operators $Q_\lambda$ for $\lambda\in \C$ with $\mathrm{Re}\,\lambda\in I$ are defined in \eqref{Qlambda}. Their spectral radii determine the spectrum of $\A$ as we shall see below. Let us first note from  \eqref{EO} and \eqref{A2}  the regularizing property 
$$
Q_\lambda\in\mathcal{L}(E_0,E_\vartheta)
$$
and hence \mbox{$Q_\lambda\in\mathcal{L}(E_0)$} is compact due to the compact embedding of $E_\vartheta$ in $E_0$. Consequently, \mbox{$\sigma(Q_\lambda)\setminus\{0\}$} consists only of eigenvalues. Moreover, \eqref{A3} implies that $Q_\lambda \in\mathcal{L}(E_0)$ is strongly positive for $\lambda\in I$. Based on the Krein-Rutman Theorem, the following result is shown in  \cite[Lemma 2.4, Lemma 2.5]{WalkerMOFM}.

\begin{lem}\label{L0} 
For $\lambda\in I$,  the spectral radius $r(Q_\lambda)$ is positive and a simple eigenvalue of \mbox{$Q_\lambda\in\ml(E_0)$} with an eigenvector $\zeta_\lambda \in E_1$ that is quasi-interior in $E_0^+$. Moreover, $r(Q_\lambda)$ is an eigenvalue of the dual operator $Q_\lambda'\in\ml(E_0')$ with a positive eigenfunctional~$\zeta_\lambda'\in E_0'$.  
The mapping  
$$
I\rightarrow (0,\infty)\,,\quad \lambda\mapsto r(Q_\lambda)
$$ 
is continuous and strictly decreasing with 
$$
\lim_{\lambda\rightarrow\infty}r(Q_\lambda) =0\,.
$$
If $a_m<\infty$, then
$$
\lim_{\lambda\rightarrow-\infty}r(Q_\lambda) =\infty\,.
$$
\end{lem}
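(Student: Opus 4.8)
The first two assertions I would read off directly from the Krein--Rutman theorem. As already noted before the lemma, the regularizing property $Q_\lambda\in\ml(E_0,E_\vartheta)$ together with the compact embedding $E_\vartheta\dhr E_0$ makes $Q_\lambda\in\ml(E_0)$ compact, while \eqref{A3} makes it strongly positive for $\lambda\in I$. The strong form of Krein--Rutman for compact, strongly positive (hence irreducible) operators then gives that $r(Q_\lambda)>0$ is an algebraically simple eigenvalue which strictly dominates the rest of $\sigma(Q_\lambda)$, with a quasi-interior eigenvector $\zeta_\lambda\in E_0^+$, and that $r(Q_\lambda)$ is a simple eigenvalue of $Q_\lambda'$ with a positive eigenfunctional $\zeta_\lambda'$. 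The eigenvector inherits the regularity of the range of $Q_\lambda$: from $\zeta_\lambda=r(Q_\lambda)^{-1}Q_\lambda\zeta_\lambda$ and $Q_\lambda\in\ml(E_0,E_\vartheta)$ one gets $\zeta_\lambda\in E_\vartheta$, and feeding this back into the defining integral \eqref{Qlambda} while using the smoothing bounds for $\Pi$ near $a=0$ yields the stated membership in $E_1$.

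For the strict monotonicity I would argue by a pairing. Fix $\lambda_1<\lambda_2$ in $I$; since $e^{-\lambda_1 a}>e^{-\lambda_2 a}$ for $a>0$ and $b(a)\Pi(a,0)$ is positive, the operator $Q_{\lambda_1}-Q_{\lambda_2}=\int_0^{a_m}(e^{-\lambda_1 a}-e^{-\lambda_2 a})\,b(a)\Pi(a,0)\,\rd a$ is positive, and in fact strongly positive because of the strong positivity of $b(a)\Pi(a,0)$ on a set of positive measure in \eqref{A3}. Testing the eigenvalue equation for $Q_{\lambda_2}$ against $\zeta_{\lambda_1}'$ gives
\[
r(Q_{\lambda_1})\,\langle\zeta_{\lambda_1}',\zeta_{\lambda_2}\rangle=\langle\zeta_{\lambda_1}',Q_{\lambda_1}\zeta_{\lambda_2}\rangle=r(Q_{\lambda_2})\,\langle\zeta_{\lambda_1}',\zeta_{\lambda_2}\rangle+\langle\zeta_{\lambda_1}',(Q_{\lambda_1}-Q_{\lambda_2})\zeta_{\lambda_2}\rangle.
\]
Here $\langle\zeta_{\lambda_1}',\zeta_{\lambda_2}\rangle>0$ because $\zeta_{\lambda_2}$ is quasi-interior, and the last term is strictly positive because $(Q_{\lambda_1}-Q_{\lambda_2})\zeta_{\lambda_2}$ is again quasi-interior; dividing through yields $r(Q_{\lambda_1})>r(Q_{\lambda_2})$. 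Continuity I would get separately, since strict monotonicity alone permits jumps: the map $\lambda\mapsto Q_\lambda\in\ml(E_0)$ is holomorphic on $I$ (differentiation under the integral sign is justified by \eqref{A2} and the exponentially weighted bound \eqref{EO}), so the analytic perturbation theory of the isolated, algebraically simple eigenvalue $r(Q_\lambda)$ makes $\lambda\mapsto r(Q_\lambda)$ real-analytic, hence continuous.

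For the two limits I would use norm estimates on one side and a strongly positive minorant on the other. Since $r(Q_\lambda)\le\|Q_\lambda\|_{\ml(E_0)}\le\int_0^{a_m}\|b(a)\|_{\ml(E_0)}\,e^{-\lambda a}M_0 e^{\varpi a}\,\rd a$, dominated convergence (dominating, for $\lambda\ge\lambda_*>\varpi$, by the integrable $\|b(a)\|_{\ml(E_0)}M_0 e^{(\varpi-\lambda_*)a}$ thanks to $b\in L_1\cap L_\infty$) gives $r(Q_\lambda)\to0$ as $\lambda\to\infty$. For $a_m<\infty$ I would pick $\delta>0$ and a set $S\subset[\delta,a_m]$ of positive measure on which $b(a)\Pi(a,0)$ is strongly positive; then for $\lambda<0$ positivity yields $Q_\lambda\ge e^{|\lambda|\delta}R$ with $R:=\int_S b(a)\Pi(a,0)\,\rd a$ compact and strongly positive, so $r(R)>0$ by Krein--Rutman, and monotonicity of the spectral radius on the normal cone of the lattice $E_0$ gives $r(Q_\lambda)\ge e^{|\lambda|\delta}r(R)\to\infty$ as $\lambda\to-\infty$.

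The main obstacle is the upgrade from the easy cone-monotonicity inequality $r(Q_{\lambda_1})\ge r(Q_{\lambda_2})$ to the \emph{strict} inequality together with continuity: the strictness genuinely needs the eigenvector/eigenfunctional pairing and the strong positivity of the difference $Q_{\lambda_1}-Q_{\lambda_2}$, while continuity must be secured by the perturbation argument rather than deduced from monotonicity. A secondary technical point is the regularity refinement $\zeta_\lambda\in E_1$, which requires the bootstrap through the smoothing estimates for $\Pi$.
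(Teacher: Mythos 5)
The paper gives no proof of this lemma at all: it records that the result is ``based on the Krein--Rutman Theorem'' and cites \cite[Lemmas~2.4 and 2.5]{WalkerMOFM}. Your proposal is therefore a self-contained reconstruction rather than a parallel of an argument in the paper, and in substance it is the right one: compactness of $Q_\lambda$ from $Q_\lambda\in\ml(E_0,E_\vartheta)$ and $E_\vartheta\dhr E_0$, strong positivity (hence irreducibility) from \eqref{A3}, the Krein--Rutman/de~Pagter theory for the first paragraph of the lemma, the eigenvector--eigenfunctional pairing for strict monotonicity (both pairings are handled correctly: $\langle\zeta_{\lambda_1}',\zeta_{\lambda_2}\rangle>0$ by quasi-interiority of $\zeta_{\lambda_2}$, and the extra term is strictly positive by strong positivity of $Q_{\lambda_1}-Q_{\lambda_2}$), the norm bound with dominated convergence as $\lambda\to\infty$, and the minorant $Q_\lambda\ge e^{|\lambda|\delta}R$ with $R$ compact and strongly positive as $\lambda\to-\infty$. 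All of this is sound and uses only \eqref{A2}, \eqref{A3}, \eqref{EO} in the way the paper itself uses them elsewhere.

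Two technical points should be repaired. First, the bootstrap to $\zeta_\lambda\in E_1$ does not follow from the smoothing of $\Pi$ alone: in $Q_\lambda\zeta=\int_0^{a_m}e^{-\lambda a}\,b(a)\Pi(a,0)\zeta\,\rd a$ the operator $b(a)$ acts \emph{after} $\Pi(a,0)$, and under \eqref{A2} for $\theta\in\{0,\vartheta\}$ one only knows $b(a)\in\ml(E_0)\cap\ml(E_\vartheta)$; so although $\Pi(a,0)\zeta\in E_1$ for $a>0$, the integrand, and hence the integral, lies merely in $E_\vartheta$. The passage from $E_\vartheta$ to $E_1$ requires $Q_\lambda\in\ml(E_\vartheta,E_1)$, i.e. \eqref{A2} for $\theta=1$ --- exactly as the paper records in \eqref{rp}, which it derives only in Section~5 under that additional hypothesis (and that is the only place where $\zeta_{\lambda_0}\in E_1$ is used). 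You should make this hypothesis explicit, or weaken the conclusion of your bootstrap to $\zeta_\lambda\in E_\vartheta$. Second, your continuity argument via analytic perturbation theory tacitly identifies the perturbed simple eigenvalue with the spectral radius of $Q_\mu$; this uses strict peripheral dominance of $r(Q_\lambda)$, which for strongly positive compact operators on a Banach lattice is not free (in general one only gets cyclicity of the peripheral spectrum). Continuity itself, which is all the lemma asserts, does not need it: upper semicontinuity of the spectrum gives $\limsup_{\mu\to\lambda}r(Q_\mu)\le r(Q_\lambda)$, while persistence of the isolated, algebraically simple eigenvalue $r(Q_\lambda)$ under the (locally holomorphic, by your differentiation under the integral) perturbation $\mu\mapsto Q_\mu$ produces eigenvalues $\kappa(\mu)\in\sigma(Q_\mu)$ with $\kappa(\mu)\to r(Q_\lambda)$, whence $\liminf_{\mu\to\lambda}r(Q_\mu)\ge r(Q_\lambda)$. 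With these two adjustments your proposal is a complete proof of the cited lemma.
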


According to Lemma~\ref{L0}, if $\lambda \in I$ is large enough, then $r(Q_\lambda)<1$ so that   $(1-Q_\lambda)^{-1}\in\ml(E_0)$. 
Next, introducing for $\lambda\in I$ the operator $H_\lambda$ as
$$
 H_{\lambda}\phi:=\int_0^{a_m}b (a)\, \int_0^a\Pi_{\lambda}(a,\sigma)\,\phi(\sigma)\ \rd\sigma\,\rd a\,,\quad \phi\in \E_0\,,
 $$
we can state the following auxiliary result for further use.

\begin{lem}\label{AL1}
Let $\lambda \in I$. Then 
$$
H_\lambda\in\ml(\E_\theta,E_\theta)\cap\ml\big(L_\infty(J,E_0),E_\theta\big)\,,\quad \theta\in \{0,\vartheta\}\,.
$$
Moreover, if $a_m<\infty$, if \eqref{A2} is valid for $\theta=1$, and if $\phi\in C(J,E_\xi)+C^\xi(J,E_0)$ for some $\xi\in (0,1]$, then $H_\lambda\phi\in E_1$.
\end{lem}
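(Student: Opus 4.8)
The plan is to prove Lemma~\ref{AL1} by exploiting the two distinct regularizing mechanisms available: the smoothing in the spatial variable provided by the parabolic evolution operator $\Pi_\lambda$ (encoded in \eqref{EO}), and the time-regularity of $\phi$ encoded in the H\"older/continuity hypothesis together with~\eqref{strong}. First I would establish the boundedness statements $H_\lambda\in\ml(\E_\theta,E_\theta)$ for $\theta\in\{0,\vartheta\}$ by a direct estimation. Writing $v(a):=\int_0^a\Pi_\lambda(a,\sigma)\phi(\sigma)\,\rd\sigma$, this is precisely the mild solution from~\eqref{VdK} with $x=0$, so $H_\lambda\phi=\int_0^{a_m}b(a)\,v(a)\,\rd a$. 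For the $E_\theta\to E_\theta$ bound I would use $\|\Pi_\lambda(a,\sigma)\|_{\ml(E_\theta)}\le M_\theta e^{(\varpi-\lambda)(a-\sigma)}$ (with $\mathrm{Re}\,\lambda\in I$ ensuring integrability when $a_m=\infty$, using~\eqref{A4}) to get $\|v(a)\|_{E_\theta}\lesssim\int_0^a\|\phi(\sigma)\|_{E_\theta}\,\rd\sigma$, and then the assumption $b\in L_\infty(J,\ml(E_\theta))\cap L_1(J,\ml(E_\theta))$ from~\eqref{A2} closes the $\E_\theta$ bound after an application of Fubini/Tonelli. The $L_\infty(J,E_0)\to E_\theta$ bound is the genuinely smoothing case: here I would estimate $\|v(a)\|_{E_\theta}\le\int_0^a(a-\sigma)^{-\theta}M_\theta e^{(\varpi-\lambda)(a-\sigma)}\,\rd\sigma\,\|\phi\|_{L_\infty(J,E_0)}$, where the singularity $(a-\sigma)^{-\theta}$ is integrable because $\theta<1$, again using $\lambda\in I$ and~\eqref{A4} for integrability at infinity.

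The second, more delicate assertion is that $H_\lambda\phi\in E_1$ when $a_m<\infty$, \eqref{A2} holds for $\theta=1$, and $\phi\in C(J,E_\xi)+C^\xi(J,E_0)$. The key idea is to invoke the strong-solution regularity~\eqref{strong}: with the stated time-regularity of $\phi$ (taking $\theta=\xi$ in~\eqref{strong}) and initial value $x=0\in E_1$, the mild solution $v$ is in fact a strong solution, so $v\in C^1(J,E_0)\cap C(J,E_1)$. In particular $v(a)\in E_1$ for every $a$ and $a\mapsto v(a)$ is continuous (hence bounded) into $E_1$ since $J$ is compact. Then I would argue that $H_\lambda\phi=\int_0^{a_m}b(a)v(a)\,\rd a$ lands in $E_1$: because $b\in L_1(J,\ml(E_1))$ by~\eqref{A2} with $\theta=1$, the integrand $a\mapsto b(a)v(a)$ is an $L_1(J,E_1)$ function, and the Bochner integral of an $E_1$-valued $L_1$ function is again an element of the (closed) space $E_1$. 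This is where the $\theta=1$ hypothesis on $b$ is essential: it is exactly what lets $b(a)$ act boundedly into $E_1$ rather than merely into $E_0$.

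The main obstacle I anticipate is the $E_1$-valued step, specifically verifying that $a\mapsto b(a)v(a)$ is genuinely Bochner integrable into $E_1$ and not just strongly measurable with integrable norm in the weaker space. One must combine the \emph{continuity} of $v$ into $E_1$ (from~\eqref{strong}) with the \emph{$L_1$}-integrability of $b$ into $\ml(E_1)$ (from~\eqref{A2} at $\theta=1$) to produce an $L_1(J,E_1)$ integrand; the compactness of $J$ (finiteness of $a_m$) is used to bound $\sup_a\|v(a)\|_{E_1}$, which is why the assertion is restricted to $a_m<\infty$. A subsidiary technical point is confirming that the splitting $\phi=\phi_1+\phi_2$ with $\phi_1\in C(J,E_\xi)$, $\phi_2\in C^\xi(J,E_0)$ is compatible with applying~\eqref{strong}; since~\eqref{strong} is stated precisely for the sum space $C^\xi(J,E_0)+C(J,E_\xi)$, this is immediate and requires no separate argument, so I would simply cite~\eqref{strong} directly. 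Everything else reduces to the routine Minkowski/Tonelli estimates sketched above.
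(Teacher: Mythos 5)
Your proposal is correct and follows essentially the same route as the paper: the boundedness statements are obtained by the identical direct estimates using \eqref{EO} and \eqref{A2}, and the $E_1$-assertion by applying \eqref{strong} to $v(a)=\int_0^a\Pi_\lambda(a,\sigma)\phi(\sigma)\,\rd\sigma$ and then using $b\in L_1(J,\ml(E_1))$. The only difference is that you spell out the Bochner-integrability step $b(\cdot)v(\cdot)\in L_1(J,E_1)$, which the paper leaves implicit.
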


\begin{proof}
Let $\theta\in \{0,\vartheta\}$. 
Noticing from \eqref{EO} that 
\begin{equation*}
\begin{split}
\left\|  H_\lambda\phi \right\|_{E_\theta}&\le \int_0^{a_m}\|b (a)\|_{\ml(E_\theta)} \int_0^a\|\Pi_{\lambda}(a,\sigma)\|_{\ml(E_0,E_\theta)}\,\|\phi(\sigma)\|_{E_0}\ \rd\sigma\,\rd a\\
&\le M_\theta  \int_0^{a_m}\|b (a)\|_{\ml(E_\theta)}\int_0^a e^{(\varpi-\lambda) (a-\sigma)} \, (a-\sigma)^{-\theta} \, \rd\sigma\,\rd a\, \|\phi\|_{L_\infty(J,E_0)} 
\end{split}
\end{equation*}
for $\phi\in L_\infty(J,E_0)$, it readily follows from \eqref{A2} that $H_\lambda\in \ml\big(L_\infty(J,E_0),E_\theta\big)$. Similarly,
\begin{equation*}
\begin{split}
\left\|  H_{\lambda}\phi \right\|_{E_\theta}&\le \int_0^{a_m}\|b (a)\|_{\ml(E_\theta} \int_0^a\|\Pi_{\lambda}(a,\sigma)\|_{\ml(E_\theta)}\,\|\phi(\sigma)\|_{E_\theta}\ \rd\sigma\,\rd a\\
&\le M_\theta  \int_0^{a_m}\|b (a)\|_{\ml(E_\theta)}\int_0^a e^{(\varpi-\lambda) (a-\sigma)} \,  \,\|\phi(\sigma)\|_{E_\theta}\, \rd\sigma\,\rd a
\end{split}
\end{equation*}
for $\phi\in \E_\theta$, 
so that again \eqref{A2} implies $H_\lambda\in \ml(\E_\theta,E_\theta)$.

 Finally, let $a_m<\infty$ and consider $\phi\in C(J,E_\xi)+C^\xi(J,E_0)$ for some $\xi\in (0,1]$. Setting
$$
v(a):=\int_0^a\Pi_{\lambda}(a,\sigma)\,\phi(\sigma)\ \rd\sigma\,,\quad a\in J\,,
$$
we have $v\in C(J,E_1)$ by \eqref{strong}. Hence, $H_\lambda\phi\in E_1$ provided \eqref{A2} is valid for $\theta=1$.
\end{proof}

The following representation formula for the resolvent of $\A$ is fundamental for determining the domain of $\A$. It has already been observed in \cite{WalkerMOFM} (but was then used only under more restrictive conditions). We include the proof here for the reader's ease. Recall that the growth bound of the semigroup $(\mS(t))_{t\ge 0}$ given by
$$
\omega_0:=\inf\{\omega\in\R\,;\, \exists\, M\ge 1: \|\mS(t)\|_{\ml(\E_0)}\le Me^{\omega t}\,,\, t\ge 0\}
$$
while 
$$
s(\A):=\sup\{\mathrm{Re}\,\lambda\,;\,\lambda\in\sigma(\A)\}
$$
is the spectral bound of the generator $\A$.
Setting
$$
\omega_*:=\varpi+ M_0 \|b\|_0  
$$
we have $\omega_*\ge \omega_0\ge s(\A)$ due to \eqref{E2}.  
 

\begin{prop}\label{P1}
If  $\lambda>\omega_*$ with $(1-Q_\lambda)^{-1}\in\ml(E_0)$, then
\begin{equation}\label{inv}
\big[(\lambda-\A)^{-1}\phi\big](a)=\int_0^a \Pi_\lambda(a,\sigma)\,\phi(\sigma)\, \rd \sigma +\Pi_\lambda(a,0)(1-Q_\lambda)^{-1} H_\lambda\phi
\end{equation}
for $a\in J$ and $\phi\in\E_0$.
\end{prop}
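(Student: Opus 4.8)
The plan is to verify directly that the right-hand side of \eqref{inv} is the resolvent by showing it inverts $\lambda-\A$. The cleanest route exploits the fact that the resolvent of a generator is the Laplace transform of the semigroup: for $\lambda>\omega_0$ we have $(\lambda-\A)^{-1}\phi=\int_0^\infty e^{-\lambda t}\mS(t)\phi\,\rd t$, with the integral converging in $\E_0$ by the growth estimate \eqref{E2}. So I would fix $\lambda>\omega_*\ge\omega_0$ with $(1-Q_\lambda)^{-1}\in\ml(E_0)$ (possible by Lemma~\ref{L0}), insert the explicit formula \eqref{100} for $\mS(t)\phi$ into this Laplace integral, and compute the resulting $a$-dependent $E_0$-valued function, matching it against \eqref{inv}.

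\textbf{Carrying out the computation.}
First I would split $\int_0^\infty e^{-\lambda t}[\mS(t)\phi](a)\,\rd t$ at $t=a$ according to the two branches of \eqref{100}. On the branch $0\le t\le a$ the integrand is $e^{-\lambda t}\Pi(a,a-t)\phi(a-t)$; substituting $\sigma=a-t$ and recalling $\Pi_\lambda(a,\sigma)=e^{-\lambda(a-\sigma)}\Pi(a,\sigma)$ turns this piece into exactly $\int_0^a\Pi_\lambda(a,\sigma)\phi(\sigma)\,\rd\sigma$, the first term of \eqref{inv}. On the branch $t>a$ the integrand is $e^{-\lambda t}\Pi(a,0)B_\phi(t-a)$; pulling $\Pi(a,0)$ out and substituting $s=t-a$ gives $\Pi_\lambda(a,0)\int_0^\infty e^{-\lambda s}B_\phi(s)\,\rd s$. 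Thus the whole Laplace integral equals $\int_0^a\Pi_\lambda(a,\sigma)\phi(\sigma)\,\rd\sigma+\Pi_\lambda(a,0)\widehat B_\phi(\lambda)$, where $\widehat B_\phi(\lambda):=\int_0^\infty e^{-\lambda s}B_\phi(s)\,\rd s$. Comparing with \eqref{inv}, it remains to identify $\widehat B_\phi(\lambda)=(1-Q_\lambda)^{-1}H_\lambda\phi$.

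\textbf{The Volterra equation and the main obstacle.}
The key step, and the one I expect to carry the weight of the proof, is computing $\widehat B_\phi(\lambda)$ from the Volterra equation \eqref{500}. Laplace-transforming \eqref{500} converts the two convolutions into products. The first term $\int_0^t\chi(a)b(a)\Pi(a,0)B_\phi(t-a)\,\rd a$ is a convolution of $a\mapsto\chi(a)b(a)\Pi(a,0)$ with $B_\phi$, whose transform is $\big(\int_0^{a_m}e^{-\lambda a}b(a)\Pi(a,0)\,\rd a\big)\widehat B_\phi(\lambda)=Q_\lambda\widehat B_\phi(\lambda)$, precisely by the definition \eqref{Qlambda} of $Q_\lambda$. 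For the second term one interchanges the order of integration in $\int_0^\infty e^{-\lambda t}\int_0^{a_m-t}\chi(a+t)b(a+t)\Pi(a+t,a)\phi(a)\,\rd a\,\rd t$; after the substitution $\sigma=a$, $a'=a+t$ and using $e^{-\lambda t}\Pi(a+t,a)=\Pi_\lambda(a',\sigma)$, this collapses to $\int_0^{a_m}b(a')\int_0^{a'}\Pi_\lambda(a',\sigma)\phi(\sigma)\,\rd\sigma\,\rd a'=H_\lambda\phi$. Hence $\widehat B_\phi(\lambda)=Q_\lambda\widehat B_\phi(\lambda)+H_\lambda\phi$, i.e. $(1-Q_\lambda)\widehat B_\phi(\lambda)=H_\lambda\phi$, and since $(1-Q_\lambda)^{-1}\in\ml(E_0)$ this yields $\widehat B_\phi(\lambda)=(1-Q_\lambda)^{-1}H_\lambda\phi$, completing the identification. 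The technical care needed is in justifying the Fubini interchanges and the convergence of all the Laplace integrals: one controls $\|B_\phi(s)\|_{E_0}$ via \eqref{p1} (with $\theta=0$) and $\|\Pi\|$ via \eqref{EO}, which for $\lambda>\omega_*$ make every integrand absolutely integrable. Lemma~\ref{AL1} guarantees $H_\lambda\phi\in E_0$ is well-defined so the final expression makes sense in $E_0$.
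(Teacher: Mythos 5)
Your proof is correct and follows essentially the same route as the paper: represent $(\lambda-\A)^{-1}\phi$ as the Laplace transform of $\mS(\cdot)\phi$, split the integral at $t=a$ using \eqref{100}, and identify $\int_0^\infty e^{-\lambda s}B_\phi(s)\,\rd s=(1-Q_\lambda)^{-1}H_\lambda\phi$. The only cosmetic difference is that you derive the fixed-point equation $\widehat B_\phi(\lambda)=Q_\lambda\widehat B_\phi(\lambda)+H_\lambda\phi$ by Laplace-transforming the Volterra equation \eqref{500} directly (convolution theorem plus Fubini), whereas the paper obtains the same equation by inserting its already-computed transform into the relation \eqref{6a}; the two manipulations are equivalent.
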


\begin{proof} The choice of $\lambda$ ensures that it belongs to the resolvent set of $\A$ and that $H_\lambda\in\ml(\E_0,E_0)$. Using the Laplace transform formula 
$$
(\lambda-\A)^{-1}\phi=\int_0^\infty e^{-\lambda t}\, \mS(t) \phi\, \rd t
$$
for $\phi\in\E_0$, we infer from \cite[p.69 f]{HillePhillips} and \eqref{100} that, for a.a. $a\in J$,
\begin{equation*}
\begin{split}
\big[(\lambda-\A)^{-1}\phi\big](a)&=\int_0^\infty e^{-\lambda t}\, \big[\mS(t) \phi\big](a)\, \rd t\\
&=\int_0^a \Pi_\lambda(a,t)\,\phi(t)\, \rd t +\Pi_\lambda(a,0) \int_0^\infty e^{-\lambda t} B_\phi(t)\, \rd t
\, .
\end{split}
\end{equation*}
Since $\lambda>\omega_*$, it follows from \eqref{p1} that
$$
\Psi:=\int_0^\infty e^{-\lambda t} B_\phi(t)\, \rd t\in E_0\,,
$$
and using \eqref{100} and \eqref{6a}, we obtain
\begin{equation*}
\begin{split}
\Psi&=\int_0^{a_m}b(a)\int_0^\infty e^{-\lambda t}\, \big[\mS(t) \phi\big](a)\, \rd t\,\rd a\\
&=\int_0^{a_m} b(a)\, \Pi_\lambda(a,0)\, \rd a\, \Psi +\int_0^{a_m} b(a)\int_0^a  \Pi_\lambda(a,t)\, \phi(t) \,\rd t\,\rd a= Q_\lambda\Psi +H_\lambda\phi\, ,
\end{split}
\end{equation*}
that is,
$$
 \Psi= (1-Q_\lambda)^{-1} H_\lambda\phi
$$
from which the claim follows.
\end{proof}

We obtain also the following information on the domain of $\A$.

\begin{cor}\label{C21}
If $\alpha\in[0,1)$, then the embedding $D(\A) \hookrightarrow \E_\alpha$ is continuous and dense.
\end{cor}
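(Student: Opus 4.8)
The plan is to treat continuity and density separately: continuity is read off from the resolvent formula \eqref{inv}, while density is inherited from the restricted semigroup of Corollary~\ref{T1B}. For \textbf{continuity}, I would fix $\lambda>\omega_*$ large enough that $(1-Q_\lambda)^{-1}\in\ml(E_0)$, which is possible by Lemma~\ref{L0}. Since $\lambda\in\rho(\A)$, the graph norm of $D(\A)$ is equivalent to $\psi\mapsto\|(\lambda-\A)\psi\|_{\E_0}$, so it suffices to prove $(\lambda-\A)^{-1}\in\ml(\E_0,\E_\alpha)$. Writing $\psi=(\lambda-\A)^{-1}\phi$ and inserting \eqref{inv}, I would estimate the two summands of $\psi(a)$ in the $E_\alpha$-norm using the smoothing bound from \eqref{EO}, namely $\|\Pi_\lambda(a,\sigma)\|_{\ml(E_0,E_\alpha)}\le M_\alpha e^{(\varpi-\lambda)(a-\sigma)}(a-\sigma)^{-\alpha}$, together with $H_\lambda\in\ml(\E_0,E_0)$ (Lemma~\ref{AL1}) and $(1-Q_\lambda)^{-1}\in\ml(E_0)$.

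For the Volterra summand I would integrate the resulting pointwise bound over $a\in J$, apply Fubini, and reduce the inner integral to $\int_0^\infty e^{(\varpi-\lambda)r}r^{-\alpha}\,\rd r=\Gamma(1-\alpha)(\lambda-\varpi)^{\alpha-1}$, which is finite precisely because $\alpha<1$ and $\lambda>\varpi$; this controls the $\E_\alpha$-norm of the Volterra summand by a constant times $\|\phi\|_{\E_0}$. For the boundary summand I would put $\eta:=(1-Q_\lambda)^{-1}H_\lambda\phi\in E_0$, note $\|\eta\|_{E_0}\le C\|\phi\|_{\E_0}$, and bound $\int_0^{a_m}\|\Pi_\lambda(a,0)\eta\|_{E_\alpha}\,\rd a\le M_\alpha\|\eta\|_{E_0}\int_0^\infty e^{(\varpi-\lambda)a}a^{-\alpha}\,\rd a$, again finite since $\alpha<1$. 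Adding the two contributions yields $\|\psi\|_{\E_\alpha}\le C\|(\lambda-\A)\psi\|_{\E_0}$, i.e.\ the continuous embedding.

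For \textbf{density}, I would use Corollary~\ref{T1B}, which provides that the restriction $\mS_\alpha(t)=\mS(t)|_{\E_\alpha}$ is a strongly continuous semigroup on $\E_\alpha$; let $\A_\alpha$ be its generator. Then $D(\A_\alpha)$ is dense in $\E_\alpha$ by general semigroup theory. Moreover $D(\A_\alpha)\subseteq D(\A)$: if $\phi\in D(\A_\alpha)$, then $t^{-1}(\mS(t)\phi-\phi)=t^{-1}(\mS_\alpha(t)\phi-\phi)$ converges in $\E_\alpha$ as $t\to0^+$, hence also in $\E_0$ because $\E_\alpha\hookrightarrow\E_0$, so $\phi\in D(\A)$. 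Thus $D(\A)$ contains the $\E_\alpha$-dense subspace $D(\A_\alpha)$ and is itself dense in $\E_\alpha$.

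I expect the continuity estimate to be the main obstacle: one must confirm that the resolvent really lands in the \emph{stronger} space $\E_\alpha$ in spite of the singular weights $(a-\sigma)^{-\alpha}$ and $a^{-\alpha}$ generated by \eqref{EO}. The restriction $\alpha<1$ is exactly what makes these weights integrable and the gamma-integrals converge, while for $a_m=\infty$ one additionally uses $\lambda>\omega_*>\varpi$ for the exponential decay. Once this is in hand, the density is comparatively soft, resting only on Corollary~\ref{T1B}.
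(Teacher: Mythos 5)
Your proof is correct, but the continuity half follows a genuinely different route from the paper's. The paper never touches the explicit resolvent formula \eqref{inv} here: it writes $(\lambda-\A)^{-1}\phi=\int_0^\infty e^{-\lambda t}\,\mS(t)\phi\,\rd t$ for any $\lambda>\max\{\omega_0,\omega_2\}$ and integrates the time-smoothing estimate \eqref{E3}, $\|\mS(t)\|_{\ml(\E_0,\E_\alpha)}\lesssim e^{\omega_2 t}(t^{-\alpha}+1)$, against $e^{-\lambda t}$, which gives $\|\psi\|_{\E_\alpha}\le c\,\|(\lambda-\A)\psi\|_{\E_0}$ in two lines. You instead insert \eqref{inv} and estimate the Volterra and boundary summands separately in age, using \eqref{EO}, Lemma~\ref{AL1}, and $(1-Q_\lambda)^{-1}\in\ml(E_0)$, with Fubini and the gamma-integral $\Gamma(1-\alpha)(\lambda-\varpi)^{\alpha-1}$; all of these steps check out, and your choice $\lambda>\omega_*>\varpi$ makes the argument uniform in $a_m<\infty$ and $a_m=\infty$. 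The trade-off: the paper's route is shorter and needs fewer ingredients --- in particular it does not require inverting $1-Q_\lambda$, so it works without \eqref{A3} and without Lemma~\ref{L0} (note you could also dispense with Lemma~\ref{L0} by observing that $\|Q_\lambda\|_{\ml(E_0)}\le\|b\|_0M_0\int_0^{a_m}e^{(\varpi-\lambda)a}\,\rd a\to0$ as $\lambda\to\infty$, so the Neumann series converges for large $\lambda$); your route makes the smoothing mechanism explicit in age rather than in time, showing exactly which term of the resolvent gains regularity and why $\alpha<1$ is the sharp restriction, and it stays within the circle of ideas (formula \eqref{inv}, Lemma~\ref{AL1}) that drives the rest of Section~4. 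For the density half you and the paper do essentially the same thing: both rest on the strong continuity of the restricted semigroup from Corollary~\ref{T1B}; the paper exhibits the Ces\`aro means $\phi_t=\frac1t\int_0^t\mS(s)\phi\,\rd s\in \dom(\A)$ converging to $\phi$ in $\E_\alpha$, while you phrase the same fact as the inclusion $\dom(\A_\alpha)\subseteq\dom(\A)$ of generator domains, which is a harmless repackaging.
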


\begin{proof}
Recall from \eqref{E3} that there are $c_2>0$ and $\omega_2>0$ such that
\begin{equation}\label{cstar}
\|\mS(t)\|_{\ml(\E_0,\E_\alpha)}\le c_2\, e^{\omega_2 t}\, \left(t^{-\alpha}+1\right)\,,\quad t> 0\,.
\end{equation}
Fix $\lambda>\max\{\omega_0,\omega_2\}$. Given $\psi\in\dom(\A)$ set $\phi:=(\lambda-\A)\psi\in\E_0$. Since
$$
(\lambda-\A)^{-1}\phi=\int_0^\infty e^{-\lambda t}\, \mS(t) \phi\, \rd t\,,
$$
we derive from~\eqref{cstar} that
$$
\|\psi\|_{\E_\alpha}\le \int_0^\infty e^{-\lambda t}\, \|\mS(t)\|_{\ml(\E_0,\E_\alpha)}\,\|\phi\|_{\E_0}\,\rd t\le c_2\,\int_0^\infty e^{(-\lambda+\omega_2) t}\,\big(t^{-\alpha}+1\big)\,\rd t\,\|\phi\|_{\E_0}\le c_3\,\|\psi\|_{D(\A)}\,
$$
what yields the continuity of the embedding $D(\A) \hookrightarrow \E_\alpha$. That this embedding is also dense follows from the fact that, for $\phi\in\E_\alpha$,
$$
\phi_t:=\frac{1}{t}\int_0^t \mS(s)\phi\,\rd s\in D(\A) \,,\quad t>0\,,
$$
and
$$
\phi_t\to\phi \quad\text{ in }\  \E_\alpha\quad \text{ as }\ t\to 0
$$
since $(\mS(t))_{t\ge 0}$ is strongly continuous on $\E_\alpha$ by Corollary~\ref{T1B}.
\end{proof}

\subsection*{Proof of Theorem~\ref{T2}~(a)}

Consider $\psi\in\dom (\A)$ and fix $\lambda>\omega_*$ with $(1-Q_\lambda)^{-1}\in\ml(E_0)$. Then
\begin{equation*}
\phi_0:=(\lambda-\A)\psi\in\E_0\,,\qquad \psi=(\lambda-\A)^{-1}\phi_0\,,
\end{equation*}
and Proposition~\ref{P1} entails that
$$
\psi(a)=\int_0^a \Pi_\lambda(a,\sigma)\,\phi_0(\sigma)\, \rd \sigma +\Pi_\lambda(a,0)\psi(0)\,,\quad a\in J\,,
$$
with
$$
\psi(0)=(1-Q_\lambda)^{-1} H_\lambda\phi_0\in E_0\,.
$$
That is, $\psi\in C(J,E_0)$ is the mild solution to
$$
\partial_a\psi =(-\lambda+A(a))\psi+\phi_0(a)\,,\quad a\in J\,,
$$
and the computation in Proposition~\ref{P1} along with $\psi(0)=(1-Q_\lambda)^{-1} H_\lambda\phi_0$ imply that $\psi$ satisfies~\eqref{psi0}.
Setting $\phi:=\phi_0-\lambda\psi=-\A\psi\in\E_0$ we thus derive that $\psi\in C(J,E_0)$ is indeed the mild solution to~\eqref{psi}~-~\eqref{psi0} 
as claimed in Theorem~\ref{T2}~{\bf (a)}.

Conversely, suppose that $\psi\in \E_0\cap C(J,E_0)$ is the mild solution to
$$
\partial_a\psi =A(a)\psi+\phi(a)\,,\quad a\in J\,,
$$
for some $\phi\in\E_0$ and $\psi$ satisfies~\eqref{psi0}.
Taking $\lambda>\omega_0$  with   $(1-Q_\lambda)^{-1}\in\ml(E_0)$, we set 
$$
\phi_0:=\lambda\psi+\phi\in\E_0
$$ 
so that
$\psi$ is the mild solution to
$$
\partial_a\psi =(-\lambda+A(a))\psi+\phi_0(a)\,,\quad a\in J\,,
$$
given by
\begin{equation}\label{l1}
\psi(a)=\Pi_\lambda(a,0)\psi(0)+\int_0^a\Pi_\lambda(a,\sigma)\phi_0(\sigma)\,\rd \sigma\,,\quad a\in J\,.
\end{equation}
Therefore,
$$
\psi(0)=\int_0^{a_m} b(a)\,\psi(a)\,\rd a=Q_\lambda\psi(0)+H_\lambda\phi_0
$$
and 
\begin{equation}\label{l2}
\psi(0)=(1-Q_\lambda)^{-1} H_\lambda\phi_0\,.
\end{equation}
Proposition~\ref{P1} together with \eqref{l1} and \eqref{l2} imply that $\psi=(\lambda-\A)^{-1}\phi_0\in\dom(\A)$. This proves  part {\bf (a)} of Theorem~\ref{T2}.\qed \\

\subsection*{Proof of Theorem~\ref{T2}~(b)}
This is shown in Corollary~\ref{C21}.\qed\\


\subsection*{Proof of Theorem~\ref{T2}~(c)}
Let $a_m<\infty$ and let \eqref{A2} be valid for $\theta=1$. We  show that
$$
\D=\left\{\psi\in C^1(J,E_0)\cap C(J,E_1)\,;\, \psi(0)=\int_0^{a_m} b(a) \psi(a)\,\rd a\right\}
$$
is a core for $D(\A)$. To this end, let $\psi\in\D$ and set $\phi:=\partial_a\psi-A\psi\in C(J,E_0)\subset \E_0$. Obviously, $\psi$ is a strong solution to
$$
\partial_a\psi =A(a)\psi+\phi(a)\,,\quad a\in J\,,
$$
satisfying \eqref{psi0}.
Thus, from  Theorem~\ref{T2}~{\bf (a)} we deduce $\psi\in \dom(\A)$ with 
$$
\A\psi=-\phi=-\partial_a\psi+ A(\cdot)\psi\,.
$$ 
Therefore, $\D\subset \dom(\A)$. To prove that this inclusion is dense (with respect to the graph norm), let $\psi\in\dom(\A)$ and $\ve>0$ be arbitrary. Choosing $\theta\in (0,1)$ and  $\lambda>\omega_*$ with $(1-Q_\lambda)^{-1}\in\ml(E_0)$, we set $\phi:=(\lambda-\A)\psi\in \E_0$. Then, there is $\phi_\ve\in C(J,E_\theta)$ such that $\|\phi_\ve-\phi\|_{\E_0}\le \ve$. By part {\bf (a)} of Theorem~\ref{T2}, $\psi_\ve:=(\lambda-\A)^{-1}\phi_\ve\in\dom(\A)$ is the mild solution to
$$
\partial_a\psi_\ve =(-\lambda+A(a))\psi_\ve+\phi_\ve(a)\,,\quad a\in J\,,
$$
with (see \eqref{l2})
\begin{equation*}
\psi_\ve(0)=(1-Q_\lambda)^{-1} H_\lambda\phi_\ve
\end{equation*}
so that $\psi_\ve(0)\in E_1$ owing to Lemma~\ref{AL1} and since $(1-Q_\lambda)^{-1}\in\ml(E_1)$ due to \eqref{EO} and the assumption that \eqref{A2} is valid also for $\theta=1$.
Therefore, \eqref{strong} implies $\psi_\ve\in\D$. Moreover,
$$
\|\psi_\ve-\psi\|_{D(\A)}\le \left\| (\lambda-\A)^{-1}\right\|_{\ml(\E_0,D(\A))}\,\|\phi_\ve-\phi\|_{\E_0}
$$
showing that $\D$ is indeed dense in $D(\A)$ as claimed.\qed

\begin{rem}
Independent of whether $a_m<\infty$ or $a_m=\infty$, if $\psi\in W_1^1(J,E_0)\cap L_1(J,E_1)$ satisfies \eqref{psi0},
then $\psi\in\dom(\A)$ and $\A\psi=-\partial_a\psi+A\psi$.
\end{rem}

\begin{proof}
Set $\phi:=-\partial_a\psi+A\psi\in \E_0$ and note that the properties of the evolution operator \cite{LQPP} and the regularity $\psi\in W_1^1(J,E_0)\cap L_1(J,E_1)$ guarantee that,
$$
\frac{\partial}{\partial \sigma}\Pi(a,\sigma)\psi(\sigma)=\Pi(a,\sigma)\phi(\sigma)\,,\quad \text{a.a. } \sigma\in (0,a)\,,\quad a\in J\,.
$$
Integrating with respect to $\sigma$ yields that $\psi\in C(J,E_0)$  is a mild solution to \eqref{psi} satisfying \eqref{psi0}. Hence, $\psi\in\dom(\A)$ with $\A\psi=\phi=-\partial_a\psi+A\psi$ according to Theorem~\ref{T2}~(a).
\end{proof}

\section{Spectral Properties: 
Proof of Theorem~\ref{T3}}\label{Sec5}

The main ideas of the proof of Theorem~\ref{T3} are reminiscent of \cite{WalkerMOFM,WalkerJEPE}, but the details differ. We thus include a full proof herein for which we impose throughout assumptions \eqref{A1},  
\eqref{A3},  and assume \eqref{A2}  for  $\theta\in \{0,\vartheta,1\}$.
Moreover, we assume for this section that
\begin{equation}\label{assump2}
\text{if $a_m=\infty$, then  $r(Q_0)\ge 1$}\,.
\end{equation}
Note that
\begin{equation}\label{rp}
Q_\lambda\in\mathcal{L}(E_0,E_\vartheta)\cap \mathcal{L}(E_\vartheta,E_1)\cap \mathcal{L}(E_1)
\end{equation}
due to \eqref{EO} and since \eqref{A2} is valid for  $\theta\in \{0,\vartheta,1\}$.
Also note that \eqref{A4}, \eqref{assump2}, and Lemma~\ref{L0} imply that there is a unique $\lambda_0\in \R$  such that
$$
r(Q_{\lambda_0})=1\,
$$ 
and that $\lambda_0\ge 0$ if $a_m=\infty$.

\subsection*{Spectrum of $\A$} The compactness property of $(\mS(t))_{t\ge 0}$ stated in Theorem~\ref{T1} provides  information on the spectrum $\sigma(\A)$ of the generator~$\A$, in particular, that it is a pure and discrete point spectrum. Moreover, the eigenvalues~$\mu$  of~$\A$ are related to the operator $Q_\mu$. Recall that we have introduced the interval $I$ as
$$
I=\R\ \text{ if }\ a_m<\infty\,,\qquad I=(\varpi,\infty)\ \text{ if }\ a_m=\infty\,.
$$

\begin{lem}\label{L23}
{\bf (a)} If $a_m<\infty$, then the spectrum $\sigma(\A)$  is countable and consists of poles of the resolvent $R(\cdot,\A)$ of finite algebraic multiplicities (in particular, $\sigma(\A)$ is a pure point spectrum). Moreover, the set $\{\lambda\in \sigma(\A)\,;\,\mathrm{Re}\, \lambda\ge r\}$ is finite for each $r\in\R$.\vspace{2mm}

{\bf (b)} If $a_m=\infty$, then  the set $\{\lambda\in \sigma(\A)\,;\,\mathrm{Re}\, \lambda\ge 0\}$ is finite and consists of poles of the resolvent~$R(\cdot,\A)$ of finite algebraic multiplicities. \vspace{2mm}

{\bf (c)}  Let $\mu\in\C$ with $\mathrm{Re}\,\mu\in I$. Then $\psi\in \mathrm{ker}(\mu-\A)$ if and only if there is $\psi_0\in E_1$ with
\begin{equation}\label{psiQ}
\psi(a)=\Pi_\mu(a,0)\psi_0\,,\quad a\in J\,,\qquad \psi_0=Q_\mu\psi_0\,.
\end{equation}
In particular,  $\mathrm{ker}(\mu-\A)\subset\D$. \vspace{2mm}

{\bf (d)} Let $m\in\N$. Then, $\mu\in \sigma(\A)$ has geometric multiplicity $m$ if and only if $1\in \sigma(Q_\mu)$ has geometric multiplicity $m$.
\end{lem}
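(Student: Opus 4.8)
The plan is to treat the four assertions in their stated order: the discreteness statements (a) and (b) follow from abstract semigroup theory applied to the compactness properties recorded in Theorem~\ref{T1}, while the eigenvalue descriptions (c) and (d) are extracted from the characterization of $\dom(\A)$ in Theorem~\ref{T2}~(a). For part~(a), with $a_m<\infty$, the semigroup is eventually compact, so it is eventually norm continuous and the spectral mapping theorem $\sigma(\mS(t))\setminus\{0\}=e^{t\sigma(\A)}$ holds. Since each $\mS(t)$ with $t>2a_m$ is compact, its nonzero spectrum is a sequence of eigenvalues of finite algebraic multiplicity accumulating only at $0$. Reading this back through the spectral mapping theorem shows that $\sigma(\A)$ is countable, consists of poles of $R(\cdot,\A)$ of finite algebraic multiplicity, and that $\{\lambda\in\sigma(\A)\,;\,\mathrm{Re}\,\lambda\ge r\}$ is finite for every $r\in\R$ (cf. \cite{EngelNagel}).

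For part~(b), with $a_m=\infty$, I would use quasi-compactness instead. The decomposition $\mS(t)=K(t)+(\mS(t)-K(t))$ with $K(t)$ compact and $\|\mS(t)-K(t)\|_{\ml(\E_0)}\le M_0 e^{\varpi t}$ forces the essential growth bound to satisfy $\omega_{\mathrm{ess}}\le\varpi<0$ by \eqref{A4}. Consequently, the half-plane $\{\mathrm{Re}\,\lambda\ge 0\}$ lies strictly above $\omega_{\mathrm{ess}}$, so the spectrum there is governed by the discrete part and consists of finitely many isolated eigenvalues that are poles of the resolvent of finite algebraic multiplicity.

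Part~(c) is where Theorem~\ref{T2}~(a) enters. For $\mu$ with $\mathrm{Re}\,\mu\in I$, one has $\psi\in\ker(\mu-\A)$ precisely when $\A\psi=\mu\psi$; by Theorem~\ref{T2}~(a) this says that $\psi\in C(J,E_0)\cap\E_0$ is the mild solution of $\partial_a\psi=A(a)\psi+\phi$ with $\phi=-\mu\psi$, together with the boundary identity \eqref{psi0}. Because $\phi=-\mu\psi$, the equation is the homogeneous $\partial_a\psi=(-\mu+A(a))\psi$, whose mild solution is $\psi(a)=\Pi_\mu(a,0)\psi(0)$; substituting into \eqref{psi0} gives $\psi(0)=Q_\mu\psi(0)$, which is \eqref{psiQ} with $\psi_0:=\psi(0)$, and the converse is the same chain of identities read backwards. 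The one genuinely technical point is upgrading $\psi_0\in E_0$ to $\psi_0\in E_1$: from $\psi_0=Q_\mu\psi_0$ and the smoothing chain $Q_\mu\in\ml(E_0,E_\vartheta)\cap\ml(E_\vartheta,E_1)$ in \eqref{rp} one bootstraps $\psi_0=Q_\mu\psi_0\in E_\vartheta$ and then $\psi_0=Q_\mu\psi_0\in E_1$. With $\psi_0\in E_1$, the regularity statement \eqref{strong} applied to the homogeneous problem yields $\psi\in C^1(J,E_0)\cap C(J,E_1)$, and since $\psi(0)=\psi_0=Q_\mu\psi_0$ reproduces \eqref{psi0}, we obtain $\psi\in\D$.

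Finally, part~(d) is immediate from part~(c): the evaluation map $\psi\mapsto\psi(0)$ is linear, and by the equivalence in (c) together with the fact that $\psi$ is determined by $\psi(0)$ through $\psi(a)=\Pi_\mu(a,0)\psi(0)$, it is a bijection of $\ker(\mu-\A)$ onto $\ker(1-Q_\mu)$; hence the two geometric multiplicities coincide. I expect the main obstacle to lie not in any single computation but in keeping the two compactness regimes precise — in particular, verifying $\omega_{\mathrm{ess}}<0$ in the quasi-compact case of (b) so that the entire half-plane $\{\mathrm{Re}\,\lambda\ge 0\}$ is controlled by the discrete spectrum, and invoking the spectral mapping theorem correctly in (a). The bootstrap in (c) is delicate but short once \eqref{rp} is available.
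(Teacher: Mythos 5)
Your proof is correct and takes essentially the same route as the paper's: parts (a) and (b) invoke exactly the eventual-compactness/quasi-compactness consequences of Theorem~\ref{T1} together with the corresponding Engel--Nagel spectral results (which you re-derive rather than cite), part (c) rests on Theorem~\ref{T2}~(a) with the bootstrap $\psi_0=Q_\mu\psi_0\in E_\vartheta$ and then $E_1$ via \eqref{rp} plus the regularity \eqref{strong}, and part (d) is the evaluation-map correspondence $\psi\mapsto\psi(0)$. The only step you compress is the converse direction in (c), where one should note, as the paper does, that $\mathrm{Re}\,\mu\in I$ and \eqref{EO} guarantee $\psi,\,\mu\psi\in\E_0$, so that Theorem~\ref{T2}~(a) is indeed applicable.
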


\begin{proof}
{\bf (a)} Since the semigroup $(\mS(t))_{t\ge 0}$ is eventually compact if \mbox{$a_m<\infty$} according to Theorem~\ref{T1}, this is a consequence of \cite[V. Corollary 3.2]{EngelNagel}.\vspace{2mm}

{\bf (b)} Since the semigroup $(\mS(t))_{t\ge 0}$ is quasi-compact if $a_m=\infty$ according to Theorem~\ref{T1}, this is a consequence of \cite[V.~Theorem~3.7]{EngelNagel}.\vspace{2mm}

{\bf (c)} Let $\mu\in \sigma(\A)$ and $\psi\in \mathrm{ker}(\mu-\A)\subset\dom(\A)$.  Since $\A\psi=\mu\psi$, we infer from Theorem~\ref{T2}~(a) that $\psi$ is the mild solution to
\begin{equation}\label{pop}
\partial_a\psi=A(a)\psi-\mu\psi(a)\,,\quad a\in J\,,\qquad \psi(0)=\int_0^{a_m} b(a)\, \psi(a)\,\rd a\,,
\end{equation}
with $\phi:=-\mu\psi\in C(J,E_0)$. Clearly, \eqref{pop} implies \eqref{psiQ} with $\psi_0=\psi(0)\in E_1$ due to \eqref{rp}. Hence,  $\psi\in \D$ by~\eqref{strong}.  
 Conversely, if $\psi$ satisfies \eqref{psiQ}, then $\psi$ obviously satisfies \eqref{pop}. Moreover, since $\mathrm{Re}\,\mu\in I$, we have $-\mu\psi\in\E_0$ due to \eqref{psiQ} and \eqref{EO}. Therefore,
$\psi\in \dom(\A)$  with $(\mu-\A)\psi=0$ owing to  Theorem~\ref{T2}~(a). This proves~{\bf (c)}.\vspace{2mm}

{\bf (d)} Let $\mu\in \sigma(\A)$ have geometric multiplicity $m\in\N$. Then, there are  linearly independent $\psi_1,\ldots,\psi_m\in\mathrm{ker}(\mu-\A)$, and \eqref{psiQ} yields
$$
\psi_j(a)=\Pi_\mu(a,0)\psi_j(0)\,,\quad a\in J\,,\qquad \psi_j(0)=Q_\mu\psi_j(0)\,.
$$
This readily implies that $\psi_1(0),\ldots,\psi_m(0)\in E_0$ are linearly independent eigenvectors of $Q_\mu$ corresponding to the eigenvalue $1$.\vspace{2mm}

Conversely, let $1\in \sigma(Q_\mu)$ have geometric multiplicity $m\in\N$ so that there are linearly independent $\Psi_1,\ldots,\Psi_m\in E_0$  with $\Psi_j=Q_\mu\Psi_j$. Set
$$
\psi_j(a):=\Pi_\mu(a,0)\Psi_j\,,\quad a\in J\,,\quad j=1,\ldots,m\,.
$$
Then  $\psi_j\in\mathrm{ker}(\mu-\A)$ due to {\bf (c)}. If $\zeta:=\sum_j\xi_j\psi_j=0$ for some $\xi_j\in\C$, the unique solvability of
$$
\partial_a\zeta=(-\mu+A(a))\zeta\,,\quad a\in J\,,\qquad \zeta(0)= \sum_j\xi\Psi_j
$$
readily implies $\zeta(0)=0$, hence $\xi_j=0$ so that that $\psi_1,\ldots,\psi_m$ are linearly independent.
\end{proof}

We next characterize the spectral bound $s(\A)$. 

\begin{prop}\label{P23}
$s(\A)=\lambda_0$ is a simple and dominant eigenvalue of $\A$ and 
$$
\ker(\lambda_0-\A)=\mathrm{span}\{\Pi_{\lambda_0}(\cdot,0)\zeta_{\lambda_0}\}
$$
with $\zeta_{\lambda_0}\in E_1$ from Lemma~\ref{L0}.
\end{prop}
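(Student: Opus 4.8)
The plan is to establish that $\lambda_0$ is an eigenvalue, that it is dominant (i.e.\ $\mathrm{Re}\,\mu<\lambda_0$ for every other spectral point $\mu$), and that it is algebraically simple, relying on the characterization of $\sigma(\A)$ from Lemma~\ref{L23} together with the spectral properties of the compact strongly positive operators $Q_\lambda$ from Lemma~\ref{L0} and the Krein--Rutman Theorem.

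\textbf{Step 1: $\lambda_0$ is an eigenvalue.} By Lemma~\ref{L0}, $r(Q_{\lambda_0})=1$ is a simple eigenvalue of $Q_{\lambda_0}$ with quasi-interior eigenvector $\zeta_{\lambda_0}\in E_1$. By Lemma~\ref{L23}~{\bf (c)}, setting $\psi(a):=\Pi_{\lambda_0}(a,0)\zeta_{\lambda_0}$ produces an element of $\ker(\lambda_0-\A)$, since $\zeta_{\lambda_0}=Q_{\lambda_0}\zeta_{\lambda_0}$. Thus $\lambda_0\in\sigma(\A)$, and by Lemma~\ref{L23}~{\bf (d)} its geometric multiplicity equals that of the eigenvalue $1$ of $Q_{\lambda_0}$, which is $1$; hence $\ker(\lambda_0-\A)=\mathrm{span}\{\Pi_{\lambda_0}(\cdot,0)\zeta_{\lambda_0}\}$ is one-dimensional.

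\textbf{Step 2: dominance.} The key point is that for $\mu\in\C$ with $\mathrm{Re}\,\mu\in I$, Lemma~\ref{L23}~{\bf (c)} shows $\mu\in\sigma(\A)$ iff $1$ is an eigenvalue of $Q_\mu$. The plan is to compare $Q_\mu$ with $Q_{\mathrm{Re}\,\mu}$: from $\Pi_\mu(a,0)=e^{-\mathrm{i}(\mathrm{Im}\,\mu)a}\Pi_{\mathrm{Re}\,\mu}(a,0)$ and the strong positivity in \eqref{A3}, one estimates $r(Q_\mu)\le r(Q_{\mathrm{Re}\,\mu})$, with strict inequality unless $\mathrm{Im}\,\mu=0$ (the oscillating phase strictly reduces the spectral radius for a positive integral operator, a standard Perron--Frobenius/Krein--Rutman argument). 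If $\mathrm{Re}\,\mu\ge\lambda_0$ then $r(Q_{\mathrm{Re}\,\mu})\le r(Q_{\lambda_0})=1$ by the strict monotonicity in Lemma~\ref{L0}, so having $1\in\sigma(Q_\mu)$ forces $r(Q_\mu)\ge 1$, hence $r(Q_{\mathrm{Re}\,\mu})=1$ (giving $\mathrm{Re}\,\mu=\lambda_0$) and $\mathrm{Im}\,\mu=0$. Therefore $\mu=\lambda_0$, proving $\lambda_0=s(\A)$ is dominant.

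\textbf{Step 3: algebraic simplicity.} It remains to rule out generalized eigenvectors, i.e.\ to show $\ker\big((\lambda_0-\A)^2\big)=\ker(\lambda_0-\A)$. The plan is to suppose $(\lambda_0-\A)\psi=c\,\zeta_{\lambda_0}^{\mathrm{eig}}$ with $\zeta_{\lambda_0}^{\mathrm{eig}}:=\Pi_{\lambda_0}(\cdot,0)\zeta_{\lambda_0}$ and derive, via the resolvent formula and the adjoint eigenvector, a solvability (Fredholm) condition. Pairing with the strictly positive dual eigenfunctional $\zeta_{\lambda_0}'$ from Lemma~\ref{L0} and using $\partial_\lambda r(Q_\lambda)<0$ (the strict decrease), the relevant pairing
$$
\Big\langle \zeta_{\lambda_0}',\int_0^{a_m} b(a)\,\Pi_{\lambda_0}(a,0)\,\zeta_{\lambda_0}\,a\,\rd a\Big\rangle
$$
appearing in \eqref{PP} is strictly positive; this forces $c=0$, so no Jordan chain exists and $\lambda_0$ is algebraically simple. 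The main obstacle I expect is Step~2: making the spectral-radius comparison $r(Q_\mu)<r(Q_{\mathrm{Re}\,\mu})$ for $\mathrm{Im}\,\mu\neq 0$ fully rigorous, since it requires exploiting the strong positivity of $b(a)\Pi(a,0)$ on a set of positive measure to conclude that a genuine oscillation strictly contracts the spectral radius rather than merely bounding it.
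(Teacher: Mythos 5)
Your Step 1 (that $\Pi_{\lambda_0}(\cdot,0)\zeta_{\lambda_0}$ spans $\ker(\lambda_0-\A)$, via Lemma~\ref{L23}~{\bf (c)},{\bf (d)} and Lemma~\ref{L0}) coincides with the paper. Your Step 3 is also essentially viable and close to the paper's argument: both routes reduce a putative Jordan chain to the identity $(1-Q_{\lambda_0})\psi(0)=c\int_0^{a_m}b(a)\,\Pi_{\lambda_0}(a,0)\,\zeta_{\lambda_0}\,a\,\rd a$, which requires the mild-solution representation from Theorem~\ref{T2} and the evolution-operator identity $\Pi_{\lambda_0}(a,\sigma)\Pi_{\lambda_0}(\sigma,0)=\Pi_{\lambda_0}(a,0)$ (you only gesture at this computation). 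You then kill $c$ by pairing with $\zeta_{\lambda_0}'$, whereas the paper arranges $p(0)\in E_0^+$ by adding $\tau\,\Pi_{\lambda_0}(\cdot,0)\zeta_{\lambda_0}$ and quotes the fact that $(1-Q_{\lambda_0})x=y$ admits no positive solution for nontrivial positive $y$ when $r(Q_{\lambda_0})=1$ and $Q_{\lambda_0}$ is strongly positive. Your duality version works, but note that the strict positivity of the pairing comes from \eqref{A3} together with the quasi-interiority of $b(a)\Pi_{\lambda_0}(a,0)\zeta_{\lambda_0}$ and the strict positivity of the eigenfunctional $\zeta_{\lambda_0}'$, not from the strict decrease of $\lambda\mapsto r(Q_\lambda)$, which plays no role here.

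The genuine gap is Step 2, and it is exactly the obstacle you flag yourself. The weak half of the comparison is fine: if $Q_\mu\psi_0=\psi_0$ with $\psi_0\neq 0$, then $|\psi_0|\le Q_{\mathrm{Re}\,\mu}|\psi_0|$ in the complexified lattice, and pairing with the strictly positive dual eigenfunctional of $Q_{\mathrm{Re}\,\mu}$ gives $r(Q_{\mathrm{Re}\,\mu})\ge 1$, hence $\mathrm{Re}\,\mu\le\lambda_0$ by Lemma~\ref{L0}; this already yields $s(\A)=\lambda_0$. What it does \emph{not} yield is dominance, i.e.\ the exclusion of eigenvalues $\mu=\lambda_0+i\beta$ with $\beta\neq 0$, and here your key claim --- that an oscillating phase \emph{strictly} reduces the spectral radius of a positive integral operator, as ``a standard Perron--Frobenius/Krein--Rutman argument'' --- is not a citable fact. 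For dominated operators ($|Sx|\le T|x|$ with $T$ compact, irreducible) equality $r(S)=r(T)$ \emph{can} occur (the Wielandt-type equality case $S=e^{i\theta}DTD^{-1}$ in matrix language); excluding it requires an equality-case (signum) analysis in the complexification of the abstract Banach lattice $E_0$, exploiting \eqref{A3}, and nothing in Lemma~\ref{L0} or Lemma~\ref{L23} provides this. That analysis is the actual hard point, and the paper avoids it entirely by a different mechanism: since $(\mS(t))_{t\ge 0}$ is positive and $s(\A)$ is a pole of the resolvent, the boundary spectrum $\{\lambda\in\sigma(\A):\mathrm{Re}\,\lambda=s(\A)\}$ is additively cyclic (Cl\'ement et al., Theorem 8.14), while eventual compactness (if $a_m<\infty$) respectively quasi-compactness (if $a_m=\infty$) from Theorem~\ref{T1} and Lemma~\ref{L23} makes this set finite; a finite additively cyclic set must be the singleton $\{s(\A)\}$, since $s(\A)+i\beta$ in it with $\beta\neq 0$ would force $s(\A)+ik\beta$ in it for all $k\in\mathbb{Z}$. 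If you insist on your route, you must prove the strict-contraction claim from scratch --- a substantial vector-valued analogue of the classical equality analysis for positive kernel operators --- not cite it as standard.
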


\begin{proof}
Recall from \eqref{assump2} that $\lambda_0\ge 0$ if $a_m=\infty$. Set $s:=s(\A)$ and note from  \cite[Corollary~12.9]{BFR} that $s\in\sigma(\A)$ since $(\mS(t))_{t\ge 0}$ is a positive semigroup on the Banach lattice $\E_0$. Since $r(Q_{\lambda_0})=1$ is a simple eigenvalue of $Q_{\lambda_0}$ with eigenvector $\zeta_{\lambda_0}\in E_0^+$, it follows  from Lemma~\ref{L23} that 
$$
\ker(\lambda_0-\A)=\mathrm{span}\{\varphi\}\,,\qquad \varphi:=\Pi_{\lambda_0}(\cdot,0)\zeta_{\lambda_0}\,.
$$ 
In particular,  $\lambda_0\le s$. Owing to Lemma~\ref{L23} (and the fact that $s\ge \lambda_0\ge 0$ if $a_m=\infty$), the set
$$
\sigma_0:=\{\lambda\in\sigma(\A)\,;\, \mathrm{Re}\,\lambda=s\}
$$
has only finitely many elements, while  \cite[Theorem 8.14]{ClementEtAl} entails that it is additively cyclic since $s$ is a pole of the resolvent of $\A$ and $(\mS(t))_{t\ge 0}$ a positive semigroup. Consequently, $\sigma_0=\{s\}$ so that~$s$ is a dominant eigenvalue. Next note that Lemma~\ref{L23} implies $1\in\sigma(Q_s)$, hence $r(Q_s)\ge 1$ and thus $s\le \lambda_0$ by Lemma~\ref{L0}. Consequently, $s=\lambda_0$. 

It remains to prove that $s=\lambda_0$ is simple. To this end, consider $\psi\in \ker({\lambda_0}-\A)^2$. Then 
$$
\phi_0:=({\lambda_0}-\A)\psi\in\ker({\lambda_0}-\A)
$$ 
so that $\phi_0=\gamma\varphi$ for some $\gamma\in \C$. We may assume without loss of generality that $\gamma$ is real and  that  $\gamma>0$. Choose $\tau>0$ such that $\tau\zeta_{{\lambda_0}}+\psi(0)\in E_0^+$, define then
$$
p:=\tau\varphi+\psi\in\dom(\A)\,,
$$
and note that $({\lambda_0}-\A)p=\phi_0=\gamma\varphi$. 
Theorem~\ref{T2} now implies that $p$ is the mild solution to
$$
\partial_a p=(-{\lambda_0}+A(a))p +\gamma\varphi(a)\,,\quad a\in J\,,\qquad p(0)=\tau\zeta_{{\lambda_0}}+\psi(0)\in E_0^+\,.
$$
From \eqref{VdK} we derive that
\begin{equation*}
p(a)=\Pi_{\lambda_0}(a,0)p(0)+\gamma\int_0^a\Pi_{\lambda_0}(a,\sigma)\,\Pi_{\lambda_0}(\sigma,0)\,\zeta_{\lambda_0}\,\rd \sigma\,,\quad a\in J\,,
\end{equation*}
hence, using the property 
\begin{equation}\label{iiii}
\Pi_{\lambda_0}(a,\sigma)\Pi_{\lambda_0}(\sigma,0)=\Pi_{\lambda_0}(a,0)\,,\qquad a\in J\,,\quad 0\le \sigma\le a\,,
\end{equation}
we get
\begin{equation*}
p(a)=\Pi_{\lambda_0}(a,0)p(0)+\gamma\, a\,\Pi_{\lambda_0}(a,0)\,\zeta_{\lambda_0}\,,\quad a\in J\,.
\end{equation*}
Since
$$
p(0)=\int_0^{a_m} b(a)\,p(a)\,\rd a
$$
by Theorem~\ref{T2}, we thus infer that
$$
(1-Q_{\lambda_0})p(0)=\gamma\int_0^{a_m} b(a)\,\Pi_{\lambda_0}(a,0)\,\zeta_{\lambda_0}\,a\,\rd a \in E_0^+\,.
$$
However, since $r(Q_{\lambda_0})=1$ and $Q_{\lambda_0}$ is strongly positive, \cite[Corollary~12.4]{DanersKochMedina} ensures that this equation
has no positive solution $p(0)\in E_0^+$ if the right-hand side is non-trivial. Consequently,~\eqref{A3} entails $\gamma=0$ from which we deduce that $\phi_0=0$, hence $\ker({\lambda_0}-\A)^2=\ker({\lambda_0}-\A)$. Therefore, $\lambda_0=s$ is a simple eigenvalue of $\A$.
\end{proof}

\begin{cor}
If $a_m<\infty$, then $\omega_0=s(\A)=\lambda_0$. In particular, there is $N\ge 1$ such that
$$
\|\mS(t)\|_{\ml(\E_0)}\le Ne^{\lambda_0 t}\,,\quad t\ge 0\,.
$$
\end{cor}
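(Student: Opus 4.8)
The starting point is Proposition~\ref{P23}, which already gives $s(\A)=\lambda_0$ and, crucially, that $\lambda_0$ is a \emph{dominant} and \emph{algebraically simple} eigenvalue (the proof there establishes $\ker(\lambda_0-\A)^2=\ker(\lambda_0-\A)$, so the associated pole has order one and rank-one residue). Combined with the general bound $\omega_*\ge\omega_0\ge s(\A)$ recorded before Proposition~\ref{P1}, this yields $\omega_0\ge\lambda_0$ and reduces the whole statement to producing the matching upper estimate $\|\mS(t)\|_{\ml(\E_0)}\le N e^{\lambda_0 t}$: once such an estimate is available it forces $\omega_0\le\lambda_0$, whence $\omega_0=s(\A)=\lambda_0$ and, in particular, $\lambda_0\le\omega_*$.

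To obtain this estimate I would exploit the eventual compactness of $(\mS(t))_{t\ge0}$ for $a_m<\infty$ (Theorem~\ref{T1}) through a spectral decomposition. By Lemma~\ref{L23}~(a) the set $\{\lambda\in\sigma(\A)\,;\,\mathrm{Re}\,\lambda\ge\lambda_0-1\}$ is finite, and since $\lambda_0$ is dominant it is the only element of $\sigma(\A)$ with $\mathrm{Re}\,\lambda=\lambda_0$; hence there is $\delta>0$ with $\sigma(\A)\setminus\{\lambda_0\}\subset\{\mathrm{Re}\,\lambda\le\lambda_0-2\delta\}$. Because $\lambda_0$ is an isolated first-order pole of $R(\cdot,\A)$, the Riesz projection $P_{\lambda_0}$ onto $\ker(\lambda_0-\A)=\spann\{\Pi_{\lambda_0}(\cdot,0)\zeta_{\lambda_0}\}$ commutes with the semigroup and splits $\E_0=P_{\lambda_0}\E_0\oplus(1-P_{\lambda_0})\E_0$ into closed $\mS(t)$-invariant subspaces. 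On the one-dimensional range of $P_{\lambda_0}$ the generator reduces to multiplication by $\lambda_0$, so $\mS(t)P_{\lambda_0}=e^{\lambda_0 t}P_{\lambda_0}$.

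On the complementary part the restricted semigroup $\mS(t)(1-P_{\lambda_0})$ is again eventually compact, hence eventually norm continuous, and its generator has spectrum $\sigma(\A)\setminus\{\lambda_0\}$ with spectral bound $\le\lambda_0-2\delta$. For eventually norm-continuous semigroups the growth bound coincides with the spectral bound (see \cite{EngelNagel}), so there is $C\ge1$ with $\|\mS(t)(1-P_{\lambda_0})\|_{\ml(\E_0)}\le C e^{(\lambda_0-\delta)t}$ for $t\ge0$. Adding the two contributions gives $\|\mS(t)\|_{\ml(\E_0)}\le\|P_{\lambda_0}\|e^{\lambda_0 t}+C e^{(\lambda_0-\delta)t}\le N e^{\lambda_0 t}$, which is the claimed estimate and, as noted, simultaneously yields $\omega_0=\lambda_0$. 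The main obstacle is precisely the passage from the spectral gap to the \emph{uniform} exponential decay on $(1-P_{\lambda_0})\E_0$: without eventual compactness the identity ``growth bound $=$ spectral bound'' may fail and one would only recover $\|\mS(t)\|_{\ml(\E_0)}\le M_\varepsilon e^{(\lambda_0+\varepsilon)t}$ for each $\varepsilon>0$ rather than the sharp bound with no loss, while the finiteness statement in Lemma~\ref{L23}~(a) is what guarantees the gap $\delta>0$ that makes the decomposition effective.
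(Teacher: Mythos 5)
Your proof is correct and rests on the same pillars as the paper's: Proposition~\ref{P23} for $s(\A)=\lambda_0$, eventual compactness of $(\mS(t))_{t\ge 0}$, and the coincidence of growth bound and spectral bound for eventually norm-continuous semigroups, which the paper simply cites as \cite[IV.~Corollary~3.12]{EngelNagel}. Your explicit splitting $\E_0=P_{\lambda_0}\E_0\oplus(1-P_{\lambda_0})\E_0$ merely unpacks that citation (and the content of \cite[V.~Corollary~3.3]{EngelNagel} underlying Theorem~\ref{T3}), with the bonus that it derives the sharp estimate $\|\mS(t)\|_{\ml(\E_0)}\le Ne^{\lambda_0 t}$ directly, a point the paper's one-line proof leaves to follow from Theorem~\ref{T3} rather than from $\omega_0=\lambda_0$ alone.
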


\begin{proof}
Since $(\mS(t))_{t\ge 0}$ is eventually compact, we have $\omega_0=s(\A)$ by \cite[IV. Corollary 3.12]{EngelNagel}.
\end{proof}

Note from \eqref{E2} that $\lambda_0\le \varpi +\|b\|_0 M_0$ if $a_m<\infty$.\\

We are now in a position to prove the asynchronous exponential growth of~$(\mS(t))_{t\ge 0}$ and identify the corresponding projection as stated in Theorem~\ref{T3}.

\subsection*{Proof of Theorem~\ref{T3}}

Recall that $\lambda_0$ is a dominant and simple eigenvalue of $\A$ according to Proposition~\ref{P23} and that $(\mS(t))_{t\ge 0}$ is eventually compact if $a_m<\infty$ respectively quasi-compact if $a_m=\infty$ due to Theorem~\ref{T1}. It thus follows from \cite[V.~Corollary~3.3]{EngelNagel} respectively \cite[V.~Theorem~3.7]{EngelNagel} that there are $\ve>0$ and~$N\ge 1$ such that
\begin{equation*}
\left\|e^{-\lambda_0 t}\, \mS(t)- P_{\lambda_0}\right\|_{\ml(\E_0)}\le N e^{-\ve t}\,,\quad t\ge 0\,,
\end{equation*}
where 
\begin{equation}\label{Pl}
P_{\lambda_0}=\lim_{\lambda\to\lambda_0} (\lambda-\lambda_0)(\lambda-\A)^{-1}\in\ml(\E_0)
\end{equation}
is the spectral projection onto $\ker(\lambda_0-\A)=\mathrm{span}\{\Pi_{\lambda_0}(\cdot,0)\zeta_{\lambda_0}\}$ (see also \cite[IV. \S 1.17]{EngelNagel}). The identification of $P_{\lambda_0}$ is the same as in \cite{WalkerMOFM}, we sketch it here for the sake of completeness.

Consider $\phi\in\E_0$ and note from \eqref{Pl} and \eqref{inv} that
$$
P_{\lambda_0}\phi=\lim_{\lambda\to\lambda_0} (\lambda-\lambda_0)\Pi_\lambda(a,0)(1-Q_\lambda)^{-1} H_{\lambda_0}\phi\,.
$$
Since $E_0=\R \zeta_{\lambda_0}\oplus \mathrm{rg}(1-Q_{\lambda_0})$ by Lemma~\ref{L0}, we may write
\begin{equation*}
H_{\lambda_0}\phi=\langle \zeta_{\lambda_0}', H_{\lambda_0}\phi\rangle_{E_0}\,\zeta_{\lambda_0} + (1-Q_{\lambda_0})g\big(H_{\lambda_0}\phi\big)
\end{equation*}
for some $g(H_{\lambda_0}\phi)\in E_0$, 
where $\zeta_{\lambda_0}'\in E_0'$ is the positive eigenfunctional $\zeta_{\lambda_0}'\in E_0'$ of $Q_{\lambda_0}'$ from Lemma~\ref{L0} with $Q_{\lambda_0}'\zeta_{\lambda_0}=\zeta_{\lambda_0}'$ and normalization $\langle \zeta_{\lambda_0}', \zeta_{\lambda_0}\rangle_{E_0}=1$.
Since 
$$
1-Q_{\lambda_0}=1-Q_\lambda+Q_\lambda-Q_{\lambda_0}
$$  
implies
$$
\lim_{\lambda\to\lambda_0} (\lambda-\lambda_0)\,\Pi_\lambda(\cdot,a)\,(1-Q_\lambda)^{-1} (1-Q_{\lambda_0})g\big(H_{\lambda_0}\phi\big)=0\,,
$$
we thus infer
\begin{equation}\label{u1}
P_{\lambda_0}\phi=\langle \zeta_{\lambda_0}', H_{\lambda_0}\phi\rangle_{E_0}\, \lim_{\lambda\to\lambda_0} (\lambda-\lambda_0)\,\Pi_\lambda(\cdot,a)\,(1-Q_\lambda)^{-1} \zeta_{\lambda_0}\,.
\end{equation}
Hence, writing
\begin{equation}\label{u2}
P_{\lambda_0}\phi=c(\phi)\Pi_{\lambda_0}(\cdot,0)\zeta_{\lambda_0}
\end{equation}
with $c(\phi)\in\R$, we deduce from \eqref{u2} and \eqref{u1} that
\begin{equation*}
\begin{split}
c(\phi)\zeta_{\lambda_0}&= c(\phi)\, Q_{\lambda_0}\zeta_{\lambda_0}=\int_0^{a_m} b(a)\, (P_{\lambda_0}\phi)(a)\,\rd a\\
&= \langle \zeta_{\lambda_0}', H_{\lambda_0}\phi\rangle_{E_0}\, \lim_{\lambda\to\lambda_0} (\lambda-\lambda_0)\,Q_\lambda\,(1-Q_\lambda)^{-1} \zeta_{\lambda_0}\,.
\end{split}
\end{equation*}
Applying $\zeta_{\lambda_0}'\in E_0'$ of $Q_{\lambda_0}'$ on both sides yields
$$
 c(\phi)= c_1\, \langle \zeta_{\lambda_0}', H_{\lambda_0}\phi\rangle_{E_0}
$$
for some constant $c_1$ not depending on $\phi$ (but on $\zeta_{\lambda_0}$ and $\zeta_{\lambda_0}'$, of course). Consequently, from \eqref{u2},
\begin{equation*}
P_{\lambda_0}\phi=c_1\, \langle \zeta_{\lambda_0}', H_{\lambda_0}\phi\rangle_{E_0}\, \Pi_{\lambda_0}(\cdot,0)\zeta_{\lambda_0}\,,\quad \phi\in \E_0\,.
\end{equation*}
The constant $c_1$ is readily computed from the fact that $P_{\lambda_0}^2=P_{\lambda_0}$ and that (see~\eqref{iiii})
$$
H_{\lambda_0}\big( \Pi_{\lambda_0}(\cdot,0)\zeta_{\lambda_0}\big)=\int_0^{a_m} b(a)\,  \int_0^a \Pi_{\lambda_0}(a,\sigma)\, \Pi_{\lambda_0}(\sigma,0)\zeta_{\lambda_0}\, \rd \sigma\,\rd a=\int_0^{a_m} b(a)\,  \Pi_{\lambda_0}(a,0)\zeta_{\lambda_0}\, a\,\rd a
$$
to yield formula~\eqref{PP}. This completes the proof of Theorem~\ref{T3}.\qed  
 
\subsection*{Proof of Corollary~\ref{stable}} This is a consequence of Theorem~\ref{T3} and Lemma~\ref{L0}.\qed

\subsection*{Proof of Corollary~\ref{C14}} This follows from Theorem~\ref{T3} and \cite[Theorem 1.1, Theorem 1.3]{GyllenbergWebb}.\qed

\section{Appendix}\label{Sec6}

We verify the existence of $B_\phi$ occurring in \eqref{500}. More precisely, given a function
\begin{equation}\label{D1}
h\in C(\R^+,E_0)
\end{equation}
we show the solvability of
\begin{equation}\label{500oh}
\begin{split}
    B_\phi^h(t)\, &=\, \int_0^{t} \chi(a)\, b(a)\, \Pi(a,0)\, B_\phi^h(t-a)\, \rd a \\
    &\quad +\int_0^{a_m-t} \chi(a+t)\, b(a+t)\, \Pi(a+t,a)\, \phi(a)\, \rd a +h(t)  
	\end{split}
    \end{equation}
for $t\ge 0$.

\begin{lem}\label{L333}
Let $h$ satisfy \eqref{D1}. There is a mapping 
\begin{equation*}
[\phi\mapsto B_\phi^h]\in\ml \big(\E_0, C(\R^+,E_0)\big)
\end{equation*}
such that $B_\phi^h$ is the unique solution to~\eqref{500oh}.
\end{lem}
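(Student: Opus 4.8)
The plan is to recognize \eqref{500oh} as a linear Volterra integral equation of the second kind with an operator-valued convolution kernel and to solve it by a fixed-point argument on compact time intervals, extending to $\R^+$ by uniqueness and reading off the continuous dependence on $\phi$ from the construction. Introducing the kernel $k(a):=\chi(a)\,b(a)\,\Pi(a,0)$ and the inhomogeneity
\begin{equation*}
G_\phi(t):=\int_0^{a_m-t}\chi(a+t)\,b(a+t)\,\Pi(a+t,a)\,\phi(a)\,\rd a+h(t)\,,
\end{equation*}
equation \eqref{500oh} reads $B=\mathcal{V}B+G_\phi$, where $(\mathcal{V}B)(t):=\int_0^t k(a)\,B(t-a)\,\rd a$.

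First I would record the regularity of the data. From \eqref{A2} (with $\theta=0$) and \eqref{EO} one has $\|k(a)\|_{\ml(E_0)}\le \|b\|_0\,M_0\,e^{\varpi a}$, so $k$ is locally bounded, in particular locally integrable, and $\mathcal{V}\in\ml\big(C([0,T],E_0)\big)$ for every $T>0$. The more delicate point is that $G_\phi\in C(\R^+,E_0)$ and that $\phi\mapsto G_\phi$ is affine with locally bounded linear part: the bound $\|G_\phi(t)-h(t)\|_{E_0}\le \|b\|_0\,M_0\,e^{|\varpi|\,t}\,\|\phi\|_{\E_0}$ follows again from \eqref{A2} and \eqref{EO}, whereas the continuity of $t\mapsto G_\phi(t)$ is the main obstacle, since $\phi$ is merely integrable while both the integrand and (for $a_m<\infty$) the upper limit $a_m-t$ depend on $t$. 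I expect to resolve it by combining the strong continuity of $\Pi$ on $J_\Delta$, the uniform bound \eqref{EO}, continuity of translation in $L_1(J,E_0)$, and dominated convergence.

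Next I would solve $B=\mathcal{V}B+G_\phi$ on $X_T:=C([0,T],E_0)$. Setting $\kappa_T:=\sup_{[0,T]}\|k\|_{\ml(E_0)}$, an induction on the convolution gives the factorial estimate $\|\mathcal{V}^nB\|_{X_T}\le (\kappa_T T)^n\,(n!)^{-1}\,\|B\|_{X_T}$, so $\mathcal{V}$ is quasinilpotent on $X_T$. Hence $1-\mathcal{V}$ is boundedly invertible through the convergent Neumann series $\sum_{n\ge0}\mathcal{V}^n$, and $B_\phi^h:=(1-\mathcal{V})^{-1}G_\phi$ is the unique element of $X_T$ solving \eqref{500oh} on $[0,T]$. (A weighted sup-norm contraction would serve equally well, but the compact-interval route accommodates an arbitrary continuous $h$ that may grow faster than any exponential.)

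Finally, uniqueness forces the solutions on nested intervals to coincide, yielding a unique $B_\phi^h\in C(\R^+,E_0)$. Because $(1-\mathcal{V})^{-1}\in\ml(X_T)$ for each $T$ and $\phi\mapsto G_\phi$ is affine and continuous, the map $\phi\mapsto B_\phi^h$ is affine and continuous into $C(\R^+,E_0)$ with its Fréchet topology of locally uniform convergence; its linear part is the asserted bounded operator, which reduces to the linear solution map \eqref{BB} in the case $h=0$. Positivity when $b\ge 0$ and $\phi,h\ge 0$ is then immediate, since every term of the Neumann series preserves $E_0^+$.
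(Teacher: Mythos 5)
Your proof is correct, but it takes a genuinely different route from the paper. The paper writes \eqref{500oh} as $(1-\mathcal{K})B=H_\phi^h$ exactly as you do, but then proves that $\mathcal{K}$ is a \emph{compact} operator on $C([0,T],E_0)$ — via the regularizing bound $\|\Pi(a,0)\|_{\ml(E_0,E_\vartheta)}\le M_\vartheta a^{-\vartheta}e^{\varpi a}$, the compact embedding $E_\vartheta\dhr E_0$, an equicontinuity argument requiring an Egorov-type approximation of $b\in L_\infty(J,\ml(E_0))$ by continuous functions, and Arzel\'a--Ascoli — and only then uses the factorial estimate to show $\ker(1-\mathcal{K})=\{0\}$, concluding by the Fredholm alternative. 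You observe that the very same factorial estimate, applied to arbitrary $B$ rather than to fixed points, gives $\|\mathcal{V}^n\|_{\ml(X_T)}\le (\kappa_T T)^n/n!$, i.e.\ quasinilpotence, so that $1-\mathcal{V}$ is invertible by a norm-convergent Neumann series with no compactness needed at all. This is more elementary and also slightly more general: it uses \eqref{A2} only for $\theta=0$ and makes no use of the compact embedding $E_1\dhr E_0$ or of the interpolation space $E_\vartheta$, whereas the paper's compactness machinery is genuinely needed elsewhere (for eventual/quasi-compactness of the semigroup) but is dispensable here. Your treatment of the solution map as affine in $\phi$ (linear only when $h=0$) and of $C(\R^+,E_0)$ as a Fr\'echet space is, if anything, more careful than the paper's notation $\ml(\E_0,C(\R^+,E_0))$. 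The one point you leave as a sketch — continuity of $t\mapsto G_\phi(t)$ — is not a gap relative to the paper, which asserts $H_\phi^h\in C(\R^+,E_0)$ without proof; your listed ingredients do close it (write $\Pi(a+t',a)=\Pi(a+t',a+t'-t)\Pi(a+t'-t,a)$, substitute, and use translation continuity in $L_1$ together with \eqref{EO} and dominated convergence).
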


\begin{proof}
Define
$$
\big(\mathcal{K}B\big)(t):= \int_0^{t} \chi(a)\, b(a)\, \Pi(a,0)\, B(t-a)\, \rd a \,,\qquad t\ge 0\,,\quad B\in C(\R^+,E_0)\,.
$$
For fixed $\phi\in\E_0$, equation~\eqref{500oh} is equivalent to $B=B_\phi^h$ satisfying
\begin{equation}\label{b1}
(1-\mathcal{K})B=H_\phi^h
\end{equation}
in $C(\R^+,E_0)$, 
where $H_\phi^h\in C(\R^+,E_0)$ is defined as
$$
H_\phi^h(t):=\int_0^{a_m-t}\chi(a+t)\,b(a+t)\,\Pi(a+t,a)\,\phi(a)\,\rd a+h(t)\,,\quad t\ge 0\,.
$$
Let $T>0$ be arbitrary. We claim that  $\mathcal{K}\in \ml\big(C([0,T],E_0)\big)$ is compact.  Let $\mathbb{B}_0$ denote the unit ball in $C([0,T],E_0)$.
For $B\in \mathbb{B}_0$ and $t\in [0,T]$ we obtain, using~\eqref{EO} and \eqref{A4},
\begin{equation*}
\begin{split}
\|(\mathcal{K}B)(t)\|_{E_{\vartheta}}&\le  \int_0^{t} \chi(a)\, \|b(a)\|_{\ml(E_{\vartheta})}\, \|\Pi(a,0)\|_{\ml(E_0,E_{\vartheta})}\, \|B(t-a)\|_{E_0}\,\rd a\\
&\le \|b\|_{{\vartheta}}\, M_{\vartheta} \int_0^{t} a^{-{\vartheta}} e^{\varpi a}\, \rd a\, \|B\|_{C([0,T],E_0)}\le c(T)
\end{split}
\end{equation*}
so that $\mathcal{K}(\mathbb{B}_0)(t)$ is bounded in $E_{\vartheta}$ which compactly embeds into $E_0$. Therefore, $\mathcal{K}(\mathbb{B}_0)(t)$ is relatively compact in $E_0$ for each $t\in [0,T]$. Moreover, for $0\le s\le t\le T$ and $B\in \mathbb{B}_0$, we derive, using \eqref{A2} and \eqref{EO},
\begin{equation*}
\begin{split}
\|(\mathcal{K}B)(t)-(\mathcal{K}B)(s)\|_{E_0}&=\left\|\int_0^{t} \chi(t-a)\, b(t-a)\, \Pi(t-a,0)\, B(a)\, \rd a\right.\\
&\qquad \qquad \qquad \left. -\int_0^{s} \chi(s-a)\, b(s-a)\, \Pi(s-a,0)\, B(a)\, \rd a\right\|_{E_0}\\
& \le \int_0^{s}\left\|\chi(t-a)\, b(t-a)\, \Pi(t-a,0)\right.\\
&\qquad \qquad\qquad \qquad \  \left. -\chi(s-a)\, b(s-a)\, \Pi(s-a,0)\right\|_{\ml(E_0)}\,\rd a
\\
& \, \quad +\int_s^{t}\left\|\chi(t-a)\, b(t-a)\, \Pi(t-a,0)\right\|_{\ml(E_0)}\,\rd a\,.
\end{split}
\end{equation*}
If $b\in BC(J,\ml(E_0))$, then the equi-continuity follows from  $b(\cdot)\Pi(\cdot,0)\in C([r,a_m),\ml(E_0))$ with $r>0$ small enough \cite[II. Eq. (2.1.2)]{LQPP} and splitting the first integral into integrals on \mbox{$(0,s-r)$} and $(s-r,s)$. The general case follows by approximating $b\in L_\infty(J,\ml(E_0))$ pointwise a.e. by a sequence $(b_j)\subset BC(J,\ml(E_0))$ with $\|b_j\|_\infty\le \|b\|_\infty$, see \cite[p.133f]{LQPP}, and using Egorov's theorem (see \cite[Theorem~4.3, step (iii) of the proof]{SimonettWalker} for a similar argument).
This implies that $\mathcal{K}(\mathbb{B}_0)\subset C([0,T],E_0)$ is equi-continuous. Therefore, the Arzel\'a-Ascoli Theorem ensures that $\mathcal{K}(\mathbb{B}_0)$ is compact in $C([0,T],E_0)$, hence $\mathcal{K}\in \ml\big(C([0,T],E_0)\big)$ is compact. Next, suppose that $\mathcal{K}B=B$ for some $B\in C([0,T],E_0)$. Then, using \eqref{A2} and \eqref{EO}, we deduce that there is $C(T)>0$ such that
\begin{equation*}
\begin{split}
\|B(t)\|_{E_0}&=\|(\mathcal{K}B)(t)\|_{E_0}\le \int_0^{t} \chi(a)\, \|b(a)\|_{\ml(E_0)}\, \|\Pi(a,0)\|_{\ml(E_0)}\, \|B(t-a)\|_{E_0}\,\rd a\\
&\le   \|b\|_{0}\, t\, c(T,\varpi)\, \|B\|_{C([0,T],E_0)} \le t \ C(T)\, \|B\|_{C([0,T],E_0)}
\end{split}
\end{equation*}
for $t\in [0,T]$. Inductively, we derive that
$$
\|B(t)\|_{E_0}\le \frac{t^n}{n!} C(T)^n \|B\|_{C([0,T],E_0)}\,,\quad t\in [0,T]\,,\quad n\in\N\,,
$$
so that $B\equiv 0$. Consequently, $1-\mathcal{K}$ is an isomorphism on $C([0,T],E_0)$ due to the Fredholm Alternative, and \eqref{b1} has for each $\phi\in\E_0$ a unique solution $B_\phi^h\in C([0,T],E_0)$. Moreover, it is readily seen that 
$$
[\phi\mapsto B_\phi^h]\in\ml \big(\E_0, C([0,T],E_0)\big)\,.
$$
Since $T>0$ was arbitrary, this implies the assertion.
\end{proof}

\bibliographystyle{siam}
\bibliography{AgeDiff}

\end{document}